
\documentclass{article}

\usepackage{microtype}
\usepackage{graphicx}
\usepackage{subcaption}
\usepackage{booktabs} 
\usepackage{placeins}

\usepackage{hyperref}




\usepackage[accepted]{icml2026}

\usepackage{amsmath}
\usepackage{amssymb}
\usepackage{mathtools}
\usepackage{amsthm}

\usepackage{afterpage}
\usepackage{stfloats}   



\usepackage{bookmark}
\usepackage{xcolor}

\usepackage{amsthm}
\usepackage{thmtools, thm-restate}
\theoremstyle{plain}
\newtheorem{theorem}{Theorem}[section]

\newtheorem{lemma}[theorem]{Lemma}

\newtheorem{assumption}[theorem]{Assumption}

\newtheorem{rem}[theorem]{Remark}

\usepackage[textsize=tiny]{todonotes}


\usepackage{graphicx}
\usepackage{bbm}

\usepackage{tikz}
\usetikzlibrary{arrows.meta, positioning}


\usepackage{lipsum}
\usepackage{url}
\usepackage[T1]{fontenc}
\usepackage{pgfplots}
\pgfplotsset{compat=newest}
\usetikzlibrary{arrows.meta,backgrounds,intersections} 
\usepgfplotslibrary{fillbetween} 

\usepackage{caption}
\captionsetup{skip=4pt} 

\usepackage{pgfplots} 
\pgfplotsset{compat=1.18} 
\usetikzlibrary{patterns,plotmarks} 
\usepgfplotslibrary{groupplots,dateplot} 
\usetikzlibrary{shapes.arrows,calc,math} 
\usepackage{bbm}
\usepackage{dsfont}

\usepackage{mathrsfs}
\usepackage{cases}
\usepackage{graphicx}
\usepackage{subcaption}
\usepackage{enumitem}

\usepackage{booktabs}
\usepackage{multirow}
\usepackage{amsmath}

\usepackage{amsmath, bm,amsthm}
\usepackage{amssymb}

\usepackage{comment}

\usepackage{float}         

\DeclareMathOperator{\rk}{rank} 


\def\RR{\mathbb{R}}
\def\NN{\mathbb{N}}

\def\MM{{\bf M}}
\def\II{{\bf I}}
\def\Sig{{\bf \Sigma}}
\def\x{\bm{x}}
\def\m{\bm{m}}
\def\vb{\bm{v}}

\def\bphi{\bm{\phi}}
\def\y{\bm{y}}

\def\blambda{\bm{\lambda}}
\def\bphi{\bm{\phi}}

\def\balpha{\bm{\alpha}}
\def\bbeta{\bm{\beta}}
\def\bgamma{\bm{\gamma}}
\def\btheta{\bm{\theta}}

\def\bpsi{\bm{\psi}}
\def\bpsip{\bm{\psi}_+}
\def\bpsim{\bm{\psi}_-}

\def\bpsipstar{\bm{\psi}^{*}_+}
\def\bpsimstar{\bm{\psi}^{*}_-}
\def\bpsipstard{\bm{\psi}^{*(d)}_+}
\def\bpsimstard{\bm{\psi}^{*(d)}_-}

\def\bmu{\bm{\mu}}

\def\zer{\bm{0}}

\def\psipbzero{{\psi}_{+,\bm{0}}}
\def\psimbzero{{\psi}_{-,\bm{0}}}
\def\psipbalp{{\psi}_{+,\balpha}}
\def\psimbalp{{\psi}_{-,\balpha}}

\DeclareMathOperator{\tv}{TV}
\DeclareMathOperator{\was}{W_2}

\DeclareMathOperator{\dist}{dist}

\DeclareMathOperator{\supp}{supp}

\definecolor{amaranth}{rgb}{0.9, 0.17, 0.31}
\definecolor{darkgreen}{rgb}{0.0, 0.5, 0.0}
\definecolor{darkorange}{rgb}{1.0, 0.4, 0.0}



\newtheorem{thm}{Theorem}[section]



\newcommand{\abs}[1]{\lvert#1\rvert}                    
\newcommand{\set}[1]{\left\{#1\right\}}                 

\setcounter{secnumdepth}{3} 
\setcounter{tocdepth}{3}    

\begin{document}

\twocolumn[
  \icmltitle{Mixtures Closest To A Given Measure: A Semidefinite Programming Approach}




  \begin{icmlauthorlist}
    \icmlauthor{Srećko Ðurašinović}{yyy,zzz}
    \icmlauthor{Jean B. Lasserre}{aaa,ccc}
    \icmlauthor{Victor Magron}{aaa,bbb}
  \end{icmlauthorlist}

  \icmlaffiliation{yyy}{College of Computing and Data Science, NTU, Singapore}
\icmlaffiliation{zzz}{CNRS@CREATE, Singapore}
  \icmlaffiliation{aaa}{LAAS-CNRS, Toulouse, France}
  \icmlaffiliation{bbb}{Université de Toulouse}
  \icmlaffiliation{ccc}{Toulouse School of Economics}
  \icmlcorrespondingauthor{Srećko Ðurašinović}{srecko001@e.ntu.edu.sg}
  \icmlkeywords{Machine Learning, ICML}

  \vskip 0.3in
]



%


 \printAffiliationsAndNotice{}

\begin{abstract}
  Mixture models, such as Gaussian mixture models (GMMs), are widely used in machine learning to represent complex data distributions. A key challenge, especially in high-dimensional settings, is to determine the mixture order and estimate the mixture parameters. We study the problem of approximating a target measure, available only through finitely many of its moments, by a mixture of distributions from a parametric family (e.g., Gaussian, exponential, Poisson), with approximation quality measured by the 2-Wasserstein ($\operatorname{W_2}$) or the total variation ($\operatorname{TV}$) distance. Unlike many existing approaches, the parameter set is not assumed to be finite; it is modeled as a compact basic semi-algebraic set. We introduce a hierarchy of semidefinite relaxations with asymptotic convergence to the desired optimal value. In addition, when a certain rank condition is satisfied, the convergence is even finite and recovery of an optimal mixing measure is obtained. We also present an application to clustering, where our framework serves either as a stand-alone method or as a preprocessing step that yields both the number of clusters and strong initial parameter estimates, thereby accelerating convergence of standard (local) clustering algorithms.
\end{abstract}

\section{Introduction}
\label{sec:intro}
\noindent 
Modeling data using mixtures of probability distributions has become a well-established approach across statistics, data science, and machine learning. Mixture models have been successfully employed in a wide range of tasks, including classification \citep{Zio2007, Permuter2006}, image and signal processing \citep{Stauffer1999, Yu2012, Reynolds2000}, natural language processing \citep{Clinchant2013, Athiwaratkun2017}, and clustering \citep{Fraley2002, Karlis2007, Chen2024}, among others. For a comprehensive overview of (finite) mixture models and their theoretical foundations, we refer the reader to \citep{titterington1985statistical, mclachlan2000finite, Antoniak1974}.
\hfill\break
Mixture models, and in particular Gaussian mixtures, have been widely investigated in both theory and practice, yet important aspects of their behavior remain incompletely understood. One of the most persistent challenges is determining the number of components, or mixture order. Because mixtures with different orders can often produce nearly indistinguishable empirical distributions, the task is typically to recover the smallest number of components that remains consistent with the observed data.
\hfill\break
\textbf{On the number of components:} Several approaches have been proposed for estimating the number of components in a mixture model \citep{McLachlan2014number}. A common strategy involves placing a prior (e.g. Gaussian) on the form of the mixture and selecting the model using likelihood-based criteria. For example, hypothesis testing via likelihood ratio tests \citep{ghosh1985asymptotic, lo2005likelihood} provides a powerful framework for model selection. However, these tests are highly sensitive to violations of standard regularity conditions \citep{drton2009likelihood, seidel2000cautionary}, which frequently occur in mixture models due to issues such as non-identifiability and boundary effects.\\
Beyond hypothesis testing, other likelihood-based criteria aim to minimize a form of Kullback-Leibler divergence between the true data-generating distribution and the fitted model. Among these, AIC \citep{akaike1974new} and BIC \citep{schwarz1978estimating} are widely used. However, both rely heavily on the quality of the maximum likelihood estimate, which is often affected by local optima in the non-convex likelihood landscape of mixture models. As a result, the selected models may either overfit or underfit the data. The bootstrap-based criterion EIC \citep{ishiguro1997bootstrapping} offers a potential improvement by addressing overfitting, but it comes at the cost of increased computational complexity and instability in high-dimensional settings. 
\\
Nonparametric methods for selecting the number of components, such as silhouette statistic \citep{kaufman1990finding} or gap statistic  \citep{tibshirani2001estimating}, are widely used because they are model-free and relatively simple to compute. However, their heuristic nature and reliance on distance measures make them prone to instability and poor performance, especially in noisy or high-dimensional data.
Techniques from numerical algebraic geometry \citep{Shirinkam2020algebraic} have also been applied.
\\
\textbf{On parameter estimation:} Likelihood-based methods, particularly \textit{maximum likelihood estimation }(MLE) combined with the Expectation-Maximization (EM) algorithm \citep{dempster1977maximum}, are widely used for parameter estimation in mixture models. Over the years, numerous enhancements have made this approach the standard tool for tackling such problems. However, computing the global MLE remains challenging due to the non-convexity of the likelihood surface. Consequently, the EM algorithm often converges only to local optima and is quite sensitive to heuristic choices for initialization and stopping criteria. Its slow convergence and need to access the full dataset at each iteration also make it computationally expensive in large or high-dimensional settings \citep{karlis2003choosing, redner1984mixture}. The Nonparametric Maximum Likelihood Estimator (NPMLE)  \citep{kiefer1956consistency} extends classical MLE by optimizing over all probability measures, allowing flexible estimation of mixing distributions without fixing the number of components. While it admits a discrete solution in one dimension \citep{lindsay1995mixture}, its structure in higher dimensions is less understood and computationally challenging due to the infinite-dimensional nature of the problem. Recent advances aim to improve scalability and adaptivity without relying on fixed grids \citep{yan2023learninggaussianmixturesusing}.  
\\
Alternatively, mixture parameters can be estimated using the \textit{method of moments}, which involves matching or minimizing the discrepancy between empirical and population moments \citep{hall2005gmm, hansen1982gmm}. Non-convexity of moment equations coupled with identifiability issues \citep{amendola2018algebraic} are the main drawbacks of this approach. A number of recent improvements, some of which rely on techniques from tensor decompositions, from both theoretical and algorithmic perspectives can be found in \citep{Lindberg2025, heinrich2018strong, wu2020optimal, Khouja2022, pereira2022tensormomentsgaussianmixture, Rong2015}.
\\
While it is impossible to provide an exhaustive list of techniques for mixture parameter estimation, we highlight several recent contributions, primarily from the theoretical computer science community, that develop polynomial-time recovery algorithms under various separation conditions \citep{bakshi2022robustly, kothari2022sos, Hosseini2015, arbas2023polynomialtimeprivatelearning, Lie2022, Moitra2010}. 
\\
Most prior work emphasizes worst-case complexity and specifies algorithms at a high level with limited empirical evaluation; additionally, the mixture order is often treated as known and fixed \emph{a priori}, or else determined by external criteria rather than learned within the model.
\hfill\break \\
\textbf{Contribution:} We study the problem of approximating a target probability measure, available only through finitely many of its moments and not necessarily compactly supported, by a mixture of probabilities from a parametric family
, which minimizes either the 2-Wasserstein distance ($\was$) or the total variation ($\tv$) distance. In contrast to much prior work, the admissible parameter space is not assumed finite; instead, it is modeled as a compact basic semi-algebraic set. Building on and extending recent developments from \citep{lasserre2024gaussian,lasserre2024TVhierarchy}, our contribution is twofold:
\begin{enumerate}
\item We develop Moment-SOS semidefinite relaxations \citep{Lasserre2009Book} for approximating a given probability measure $\mu$ on $\RR^n$ by a mixture $\nu$ that minimizes either {\color{black}its $\was$ or $\tv$ distance to $\mu$}. In our formulation, the mixing measure is the decision variable, and we add a polynomial \textit{regularization term} on its moments to improve numerical conditioning and eventually promote sparsity. As a result, we can recover finite mixtures \textit{without} fixing the mixture order or imposing strong prior assumptions on the mixture structure. Our approach rests on two observations:
\begin{itemize}[leftmargin=*]
\item For the parametric families we consider (e.g., Gaussian, Poisson, exponential), their moments are \textit{polynomial functions} of the parameters, and we prove (Lemma \ref{prop:multivariate-carleman}) that the resulting mixtures are \textit{moment-determinate}. This structure appears to be overlooked in the optimization and machine-learning literature, whereas we show that it leads to polynomial constraints on the unknown mixture that are directly amenable to SDP lifting, with (sometimes finite) \textit{global convergence} guarantees. 
  \item Computing the {\color{black}$\tv$-distance} is notoriously challenging. We leverage a domination property of the Hahn-Jordan decomposition of the signed measure $\mu-\nu$ to derive tractable relaxations and establish convergence. To our knowledge, this device has \textit{not} been explicitly exploited in theoretical and computational formulations of $\tv$ for mixture models.
\end{itemize}
\item In a first potential application, and for illustration purposes, we \textit{design an algorithm} that inherits strong convergence properties and can be applied to a range of (unsupervised) clustering tasks under varying assumptions on the underlying parametric mixture family. Our method  can significantly improve the performance of standard clustering algorithms, such as $k$-means \citep{Arthur2007k++} and EM \citep{dempster1977maximum}, by reducing both their number of iterations and their sensitivity to initialization.
\end{enumerate}

\subsection{Notation} Let $\RR[\x] = \RR[x_1, \dotsc, x_n]$ denote the ring of real polynomials in $n$ variables, where $\bm{x} = (x_1, \dotsc, x_n)$.
For any degree $d \in \NN$, let $\RR_d[\x]$ represent the vector space of real polynomials of degree at most $d$, equipped with the standard monomial basis $\vb_d := (1, x_1, \dots, x_n, x_1^2, \dots, x_n^d)$ of size $s(n,d) := \binom{n + d}{d}$. A positive semidefinite (PSD) matrix ${\bf M}$ is denoted by ${\bf M} \succeq \zer$. The space of symmetric matrices of size $n$ is denoted $\mathcal{S}^n$, and its PSD cone by $\mathcal{S}_+^n$. Identity matrix of size $n$ is denoted by $\II_n$.
\\
\noindent
For a given pair of integers $(d, n) \in \NN^2$, we define the multi-index set $\NN^n_d := \set{\balpha = (\alpha_1, \dots, \alpha_n) \in \NN^n \mid \sum_{i=1}^n \alpha_i \leq d}$.
Accordingly, for any $\x \in \RR^n$ and $f \in \RR_d[\x]$, we write  $f(\x) = \sum_{\balpha \in \NN^n_d} f_{\balpha} \x^{\balpha}$, where each coefficient $f_{\balpha} \in \RR$, and $\x^{\balpha} = x_1^{\alpha_1} \cdots x_n^{\alpha_n}$. The support of $f$ is defined as $\supp(f) := \set{\balpha \in \NN^n \mid f_{\balpha} \neq 0}$.
\noindent
We denote by $\RR^S$ the space of real-valued sequences indexed by a set $S$. The space of finite positive Borel measures (resp. probabilities) supported on a set $K$ is denoted by $\mathcal{M}_+(K)$ (resp.  $\mathscr{P}(K)$). 
Finally, $\mathds{1}_S$ denotes the indicator function of the set $S$.
\subsection{Moment and localizing matrices}

Let $\y=(y_{\balpha})_{\balpha\in\NN^n}$ be a real-valued sequence. We can always associate to $\y$ the linear \emph{Riesz functional} $L_{\y}$ defined as follows:
\begin{align}
  L_{\y}: \RR[\x]\ni f\ & \mapsto L_{\y}(f):=\sum_{\balpha \in \supp(f)} f_{\balpha} y_{\balpha}\in  \RR.
\end{align}
For every $d\in \NN$, the symmetric matrix ${\bf M}_d(\y)$ of size $s(n,d)$, with rows and columns indexed by $\NN^n_d$, and defined entrywise by
\begin{equation}
    {\bf M}_d (\y){(\balpha,\bbeta)}:=L_{\y}( \x^{\balpha}\x^{\bbeta}) = y_{\balpha+\bbeta} ,
\end{equation}
with $ \balpha, \bbeta \in \NN^n_d$, is called the \emph{pseudo-moment matrix} of order $d$ associated with $\y$.
Similarly, for $r\in\RR[\x]$, the \emph{localizing matrix} of order $d$ associated with $r$ and $\y$
is the symmetric matrix ${\bf M}_d(r\y)\in\RR^{s(n,d)\times s(n,d)}$, indexed by $\NN_d^n$, with entries
\begin{equation}
\label{eq:localizing-matrix}
\begin{aligned}
{\bf M}_d(r\y)(\balpha,\bbeta)
&:= L_{\y}\!\big(r(\x)\,\x^{\balpha}\x^{\bbeta}\big) \\
&= \sum_{\bgamma\in\supp(r)} r_{\bgamma}\,y_{\balpha+\bbeta+\bgamma},
\end{aligned}
\end{equation}
where $\balpha,\bbeta \in \NN^n_{\,d-\lceil \deg(r)/2\rceil}$.
\\
A sequence $\y$ has a \emph{representing measure} $\mu$ if 
$\displaystyle y_{\balpha} = \int_{\RR^n} \x^{\balpha} \, d\mu(\x), \quad \text{for all } \balpha \in \NN^n.$ In this case, necessarily ${\bf M}_d(\y) \succeq \zer$ for all $d \in \NN$. Furthermore, if the support of $\mu$ is contained in the semi-algebraic set $\set{\x \in \RR^n \mid r_j(\x) \geq 0,\ j \in \set{1, \dots, l}}$, then ${\bf M}_d(r_j \y) \succeq \zer$ for all $d \in \NN$ and all $j \in \set{1, \dots, l}$.

\section{Problem Formulation}
\label{sec: prob_form}
\noindent
Let $\mu$ be an (unknown) probability measure on $\RR^n$, $n \in \NN^*$. In many settings with heterogeneous data, it is natural to approximate $\mu$ by a mixture of distributions from a parametric family $\{\mu_{\btheta}\}_{\btheta \in S_{\btheta}} \subset \mathscr{P}(\RR^n)$. We make a non-restrictive assumption that the parameter vector $\btheta$ lies in a basic
semi-algebraic set
\begin{equation}
\label{def: domain}
S_{\btheta}
:= \{\btheta\in\RR^p \mid r_j(\btheta)\ge 0,\ j\in\set{1,\dots,l}\},
\end{equation}
with $r_j\in\RR_d[\btheta]$ and $p,l,d\in\NN^*$.
As a concrete example, consider a mixture of multivariate Gaussian distributions where the parameters $(\m,\Sig)\in \RR^n\times \mathcal{S}_+^{n}$
of each component lie in a set $S_{\m,\Sig}$ defined via
\begin{equation}
\label{eq:SmSigma}
\begin{aligned}
\{(\m,\Sig)\mid\ &
a \II_n \preceq \Sig \preceq b \II_n,\, r_j(\m)\ge 0 \},
\end{aligned}
\end{equation}
for some $0<a<b$ and polynomials $r_j\in \RR[\m], j\in\set{1,\dots,l}$.
 The semi-algebraic structure of $S_{\m, \Sig}$ arises from the fact that the PSD constraints on the covariance matrix $\Sig$ can be expressed using at most $2n$ polynomial inequalities of degree at most $n$, by requiring the principal minors of matrices $\Sig - a\II_n$ and $ b\II_n - \Sig $ to be nonnegative.
\\
\noindent
Let $\phi \in \mathscr{P}(S_{\btheta})$ be a \emph{mixing measure}, and define the induced mixture $\nu_\phi \in \mathscr{P}(\RR^n)$ by
\begin{equation}\label{eq: mixture-def}
\nu_\phi(B) \;=\; \int_{S_{\btheta}}\mu_{\btheta}(B)\,d\phi(\btheta)
\quad\text{for all } B \in \mathcal{B}(\RR^n).
\end{equation}
Our goal is to approximate $\mu$ by some $\nu_{\phi}$ from this mixture family, according to a chosen probability distance ${\rm dist}(\cdot,\cdot)$ (e.g., $\was$ or $\tv$), while adding a small regularization on the mixing measure. Therefore, we consider
\begin{subnumcases}
{\tau^{\dist}_{\varepsilon,R} :=
 \label{eq: general problem}}
\inf_{\nu_\phi,\phi} \left\{\dist(\mu,\nu_\phi)
+\varepsilon \int_{S_{\btheta}} R(\btheta)\,d\phi(\btheta)\right\}\\
    \text{s.t.}\quad \nu_\phi\in\mathscr{P}(\RR^n),\, \phi\in\mathscr{P}(S_{\btheta}),
\end{subnumcases}
where $R \in \RR[\btheta]$ is a user-chosen \textit{polynomial} regularizer and $\varepsilon>0$ controls the regularization strength.
\\
Importantly, we do \emph{not} assume that $\phi$ is $K$-atomic. However, finite mixtures that are optimal may instead arise from the optimization procedure itself.
\\
In practice, $\mu$ is observed via $N$ samples $\x_1,\dots,\x_N\in\RR^n$, giving the empirical measure $\mu^N=\frac{1}{N}\sum_{i=1}^N \delta_{\x_i}$,
with $\delta_{\x_i}$ the Dirac mass at $\x_i$. However, even if $\mu$ is replaced by $\mu^N$, Problem (\ref{eq: general problem}) remains infinite-dimensional, as it is not restricted to finite-order mixtures, and since we do not assume that $S_{\btheta}$ is finite.  It is therefore intractable in its native form. To obtain tractable relaxations, we work with sequences of finite size representing \emph{moments}  of $\mu$ and of the mixing measure $\phi$.
\\
Recall that a moment sequence is called \emph{determinate} if it admits a unique representing measure. A classical sufficient condition is the multivariate Carleman condition~\citep{nussbaum1965quasianalytic}. We assume that the data-generating measure $\mu$, with moment sequence $\bmu = (\mu_{\balpha})_{\balpha \in \NN^n}$, is moment-determinate:

\begin{assumption}[Moment determinacy]\label{ass: mvCarleman}
    For all $j \in \{1, \dots, n\}$,
    \[
    \sum_{k=1}^{+\infty} \bigl(L_{\bmu}(x_j^{2k})\bigr)^{-1/2k}=+\infty.
    \]
\end{assumption}

\noindent
We also require parametric distributions $\{\mu_{\btheta}\}_{\btheta \in S_{\btheta}}$ to have moments that depend polynomially on parameters,  a property satisfied by many standard parametric families (e.g., Gaussian, Poisson, exponential mixtures):

\begin{assumption}[Polynomial moments]\label{ass: polynomialmoments} 
For every $\balpha\in\NN^n$, there exists a polynomial 
$p_{ \balpha}\in\RR_{|\balpha|}[\btheta]$ such that 
\[
\int_{\RR^n}\x^{\balpha}\,d\nu_\phi(\x)
=\int_{S_{{\btheta}}}p_{\balpha}(\btheta)\,d\phi(\btheta)
\quad\text{for all } \phi\in\mathscr{P}(S_{{\btheta}}).
\]
\end{assumption}

\noindent
For example, univariate distribution $\mu_{m,\sigma} \sim \mathcal{N}(m,\sigma^2)$ is such that $p_3(m,\sigma)=m^3+3m\sigma^2$.
Under Assumptions~\ref{ass: mvCarleman} and~\ref{ass: polynomialmoments}, problem~(\ref{eq: general problem}) can be transformed into a \emph{generalized moment problem} (GMP) with polynomial data. Hence, $\tau^{\dist}_{\varepsilon,R}$ can be approximated arbitrarily well by a Lasserre-type semidefinite programming hierarchy~\citep{Lasserre2009Book,Lasserre_2015}. 
\\
More precisely, at relaxation order $d\in\NN^*$, we solve an SDP that only involves moments of $\mu$ up to order $2d$, yielding a lower bound $\tau^{\dist}_{d,\varepsilon,R}\leq\tau^{\dist}_{\varepsilon,R}$. As $d$ increases, the sequence $(\tau^{\dist}_{d,\varepsilon,R})_{d}$ is monotone non-decreasing and converges to $\tau^{\dist}_{\varepsilon,R}$ under mild conditions, with \emph{finite} convergence whenever a standard rank (\textit{flat extension}) condition holds. In polynomial optimization, such finite convergence is generic~\citep[Chapter~7]{Lasserre_2015}. \\
In the finite-convergence regime, we further show how to extract the mixture order and parameter estimates from the optimal SDP solutions, which makes the SDP hierarchy a practical preprocessing tool for mixture estimation and, in particular, for clustering.

\section{SDP relaxations: Main properties}
\label{sec: relaxations}
In this section, we explain how the regularized optimization problem
(\ref{eq: general problem}) gives rise to tractable semidefinite relaxations;
the overall pipeline is summarized in Figure~\ref{fig:pipeline_sdp}.
The construction is the same for all distances considered: only the
distance-specific linear and PSD constraints change.  At a high
level, starting from a target measure $\mu$ (or its empirical counterpart
$\mu^N$) and a parametric mixture family $\{\mu_{\btheta}\}_{\btheta\in
S_{\btheta}}$, we:
\begin{enumerate}[leftmargin=1.2em]
    \item Write the chosen distance $\dist\in\{\was,\tv\}$ between $\mu$ and a
    mixture $\nu_\phi$ as an infinite-dimensional linear program over measures;
    \item Use the polynomial moment representation from Assumption
\ref{ass: polynomialmoments} to reformulate the problem as a generalized moment problem (GMP), in which the decision variables are the \textit{infinite} moment sequences of the unknown measures;
    \item Consider only the first $2d$ moments and treat the resulting truncated
pseudo-moment sequences as new decision variables. Then impose the \textit{necessary}
conditions for these pseudo-moments to admit representing measures supported on
the prescribed sets by requiring the associated moment and localizing matrices
to be positive semidefinite. This yields a finite-dimensional SDP of type (\ref{eq:generic_sdp}) whose size
grows with $d$.
\end{enumerate}
All relaxations we solve share the same generic SDP structure (explicit derivations are deferred to Appendices \ref{appendix: A-W2} and \ref{appendix: A-TV}). Let
$\y_d^{\dist}\in\RR^{m_{n,p,d}}$, with $m_{n,p,d}\in\NN$,  collect all truncated moments (up to degree $2d$) of the
decision measures.  The corresponding SDP relaxation of
(\ref{eq: general problem}) has the form:
\begin{subnumcases}
{\tau_{d,\varepsilon,R}^{\dist} :=
 \label{eq:generic_sdp}}
\min_{\y_d^{\dist}} \;\langle \mathbf{c}_{d,\varepsilon,R}^{\dist}, \y_d^{\dist}\rangle\\
    \text{s.t.}\; \mathbf{A}_d^{\dist} \y_d^{\dist} = \mathbf{b}_d^{\dist},\;
   \mathbf{M}_d^{\dist}(\y_d^{\dist})\succeq \zer.
\end{subnumcases}
Observe that (\ref{eq:generic_sdp}) is indeed an SDP instance: it
consists of minimizing the linear functional $\langle \mathbf{c}_{d,\varepsilon,R}^{\dist},
\y_d^{\dist}\rangle$ over an affine subspace
$\{\y_d^{\dist} : \mathbf{A}_d^{\dist}\y_d^{\dist} = \mathbf{b}_d^{\dist}\}$
intersected with the inverse image of the positive semidefinite cone
$\mathcal{S}^{m_{n,p,d}}_+$ under the affine map $\y_d^{\dist}\mapsto
\mathbf{M}_d^{\dist}(\y_d^{\dist})$, where
$\mathbf{M}_d^{\dist}(\y_d^{\dist})$ is a symmetric block-diagonal matrix whose
blocks are distance-specific moment and localizing matrices \citep{Lasserre_2015}.
\FloatBarrier
\begin{figure*}[t]
\centering
\resizebox{0.96\textwidth}{!}{%
\begin{tikzpicture}[
    >=Latex,
    node distance=0.9cm,
    block/.style={
        rectangle,
        draw,
        rounded corners,
        align=center,
        inner sep=3pt,
        font=\normalsize,
        text width=3.2cm
    },
    inputblock/.style={block, fill=blue!4},
    processblock/.style={block, fill=gray!3},
    outputblock/.style={block, fill=green!4}
]

\node[inputblock] (s1) {%
    \textbf{Input}\\
    $\mu$ (or $\mu^N$),\\
    family $\{\mu_{\btheta}\}_{\btheta\in S_{\btheta}}$
};

\node[processblock, right=of s1] (s2) {%
    \textbf{Measures}\\
    Write the problem in the form~\eqref{eq: general problem}
};

\node[processblock, right=of s2] (s3) {%
    \textbf{Moments}\\
    Assumption~\ref{ass: polynomialmoments}\\
    and Lemma~\ref{prop:multivariate-carleman}\\
    yield an instance of GMP\\
    (e.g., \eqref{eq: wasserstein2-exactmoment})
};

\node[processblock, right=of s3] (s4) {%
    \textbf{SDP hierarchy}\\
    Restrict to moments of order $2d$\\
    and solve the associated SDP\\
    \eqref{eq:generic_sdp}
};

\node[outputblock, right=of s4] (s5) {%
    \textbf{Output}\\
    An estimate of $\nu_\phi$
};

\draw[->] (s1) -- (s2);
\draw[->] (s2) -- (s3);
\draw[->] (s3) -- (s4);
\draw[->] (s4) -- (s5);

\node[above=0.12cm of s2, font=\small\bfseries, text=red] {$\bm{\times}$ not tractable};
\node[above=0.12cm of s3, font=\small\bfseries, text=red] {$\bm{\times}$ not tractable};
\node[above=0.12cm of s4, font=\small\bfseries, text=green!50!black] {$\bm{\checkmark}$ tractable};

\end{tikzpicture}%
}
\caption{Mixture estimation via SDPs -- Pipeline overview.}
\label{fig:pipeline_sdp}
\end{figure*}
\subsection{Wasserstein distance for mixtures}
\label{subsec:w2_mixtures}
We first instantiate our framework with the \emph{squared} $2$-Wasserstein distance.
For given $\mu,\nu_\phi\in\mathscr{P}(\RR^n)$, it is defined by
\begin{subnumcases}
{\was(\mu,\nu_\phi)^2 :=
\label{eq: wasserstein2}}
\inf\limits_{\lambda}
\ \displaystyle\int_{\RR^n\times\RR^n}\|\x-\y\|^2\,d\lambda(\x,\y)\\
\text{s.t.}\quad \pi_{\x}\#\lambda=\mu,\;\pi_{\y}\#\lambda=\nu_\phi .
\end{subnumcases}

where $\lambda\in\mathscr{P}(\RR^n\times\RR^n)$,  $\pi_{\x},\pi_{\y} : \RR^n \times \RR^n \to \RR^n$ are the canonical
projections and $\pi_{\x}\#\lambda$, $\pi_{\y}\#\lambda$ denote the induced
push-forward measures \cite{Villani2009OT}.
\\
Since we aim at approximating $\mu$ by a mixture, we optimize over all mixing
measures $\phi$ supported on $S_{\btheta}$. Inspired by \citet{lasserre2024gaussian},
we introduce the following $\varepsilon$-\emph{regularized moment formulation}:
\begin{subnumcases}
{\tau^{\was}_{\varepsilon, R}:=\label{eq: wasserstein2-exactmoment}}
\inf_{\lambda,\phi}
    L_{\blambda}(\|\x-\y\|^2)+\varepsilon L_{\bphi}(R) \\
\text{s.t.}\;
    \lambda_{\balpha,\bm{0}}=\mu_{\balpha},\;
    \lambda_{\bm{0},\balpha}= L_{\bphi}(p_{\balpha}),\ \balpha\in\NN^n,
\end{subnumcases}
with $\lambda\in\mathscr{P}(\RR^n\times\RR^n),\ \phi\in\mathscr{P}(S_{\btheta})$. Moreover, for all $\balpha,\bbeta\in\NN^n$,
$\lambda_{\balpha,\bbeta}:=\int_{\RR^n\times\RR^n}\x^{\balpha}\y^{\bbeta}
\,d\lambda(\x,\y)$ and $p_{\balpha}$ is given by
Assumption~\ref{ass: polynomialmoments} . 
\\
Problem (\ref{eq: wasserstein2-exactmoment}) is still an infinite-dimensional
linear program. 
To obtain a tractable relaxation, we truncate the moment sequences at degree $2d$
and replace the unknown measures by finite collections of pseudo-moments.
We then impose the usual positivity and support conditions via moment and
localizing matrices \citep{Lasserre2009Book,Lasserre_2015}, which are necessary for these (pseudo) moments to admit
representing measures supported on the prescribed sets. 
\\
Let
$d_j := \lceil \deg(r_j)/2 \rceil$ for $j\in\{0,\dots,l\}$ and set $d_{\min}:=\max\!\Big\{\max_{j\in\set{1,\dots,l}} d_j,\ \big\lceil \deg(R)/2 \big\rceil\Big\}.$ For any $d\ge d_{\min}$, we introduce the following decision vector of size $m_{n,p,d}=s(2n,2d)+s(p,2d)$:
\begin{equation}
    \y_d^{\was} := (\blambda,\bphi) \in \RR^{m_{n,p,d}},
\end{equation}
where $\blambda$ and $\bphi$ are indexed by all monomials in
$(\x,\y)$ and $\btheta$, respectively, up to total degree $2d$, and play the role of truncated moment sequences of the measure $\lambda$ and the
mixing measure $\phi$.
\\
We then define the $\varepsilon$-\emph{regularized $\was$-moment relaxation of order $d$}, namely:
\begin{subnumcases}
{\tau^{\was}_{d,\varepsilon,R}:=\label{eq:W2_SDP_instance}}
\min_{\y_d^{\was}} \;\langle \mathbf{c}_{d,\varepsilon,R}^{\was}, \y_d^{\was}\rangle\\
\text{s.t.}\;
     \mathbf{A}_d^{\was}\y_d^{\was} = \mathbf{b}_d^{\was},\;
     \mathbf{M}_d^{\was}(\y_d^{\was})\succeq \zer.
\end{subnumcases}
In (\ref{eq:W2_SDP_instance}), the cost vector $\mathbf{c}_{d,\varepsilon,R}^{\was}$
collects the coefficients of the objective
$L_{\blambda}(\|\x-\y\|^2)+\varepsilon L_{\bphi}(R)$; the linear operator
$\mathbf{A}_d^{\was}$ enforces the marginal
moment-matching constraints
$\lambda_{\balpha,\bm{0}}=\mu_{\balpha}$ and
$\lambda_{\bm{0},\balpha}=L_{\bphi}(p_{\balpha})$ for $|\balpha|\le 2d$; and
$\mathbf{M}_d^{\was}(\y_d^{\was})$ is a symmetric block-diagonal matrix whose
blocks are the moment matrix $\MM_d(\blambda)$ and the localizing matrices
$\MM_{d-d_j}(r_j\bphi)$, $j\in\{0,\dots,l\}$.
\\
From every feasible solution of (\ref{eq: wasserstein2-exactmoment}),  one can construct a finite-dimensional vector that is feasible for (\ref{eq:W2_SDP_instance}). Hence,
(\ref{eq:W2_SDP_instance}) is a genuine convex relaxation and
$\tau^{\was}_{d,\varepsilon,R}\le\tau^{\was}_{\varepsilon,R}$ for all
$d\ge d_{\min}$.
\begin{rem}
The regularization polynomial $R$ both stabilizes the SDP numerically and
encodes structural preferences on the mixture. A typical choice is
$R(\btheta)=\sum_{0 \neq \bgamma \in \NN_d^p } \btheta^{2 \bgamma}$,
which helps promote a low-rank structure on the optimal solution, thereby
reducing model complexity and avoiding overfitting. Finally, in practice, only empirical moments $\mu^N_{\balpha}$ are available, so one may
allow a small error margin for the moment-matching constraints. By the law of large numbers, the discrepancy
$\mu^N_{\balpha}-\mu_{\balpha}$ becomes small as $N$ grows. In high-dimensional
settings, however, accurately estimating higher-order moments may still require large sample size. A detailed non-asymptotic analysis relating the
admissible relaxation order $d$ to $N$ and $n$ via concentration bounds for
empirical moments is provided in \citep{VuBachocPauwels2022EmpiricalChristoffel}.
\end{rem}
\begin{thm}
\label{thm: convergence w2}
Assume that $S_{\btheta}$ in (\ref{def: domain}) is compact and that Assumptions~\ref{ass: mvCarleman}
and~\ref{ass: polynomialmoments} hold. Let
$\y_d^{\was}=(\blambda^{*(d)},\bphi^{*(d)})$
be an optimal solution of (\ref{eq:W2_SDP_instance}) for some $d\ge d_{\min}$,
and suppose that
\begin{equation}\label{eq: flat extention}
    \operatorname{rank}\MM_d(\bphi^{*(d)})
      =\operatorname{rank}\MM_{d-d_{\min}}(\bphi^{*(d)})=K
\end{equation}
for some $K\in\NN^{*}$. Then the hierarchy has converged at level $d$, i.e., \
$\tau^{\was}_{d,\varepsilon,R} = \tau^{\was}_{\varepsilon,R}$. In addition, one can
recover a $K$-atomic mixing measure $\phi^{*}$ supported on $S_{\btheta}$ such
that the associated mixture $\nu_{\phi^{*}}$ minimizes the
$\varepsilon$-regularized $\was$ distance to $\mu$.
\end{thm}
Condition (\ref{eq: flat extention}), usually referred to as the \emph{flat extension}
criterion, thus provides a certificate of finite convergence for the hierarchy.
In practice, Theorem ~\ref{thm: convergence w2} (proved in
Appendix~\ref{appendix: A-W2}) implies that the mixture order $K$ can be read
off from the rank of $\MM_d(\bphi^{*(d)})$. Moreover, the corresponding component
parameters can be recovered by a  linear-algebraic extraction procedure 
\citep{CurtoFialkow2005TruncatedKMoment,detectingHenrionLasserre}
(see Appendix~\ref{app: auxiliary}, Algorithm~\ref{alg:extract_CFHL}).

\subsection{Total variation distance for mixtures}
\label{subsec:tv_mixtures}
Alternatively, we may measure the proximity between the unknown data-generating
measure and a mixture of distributions using the total variation distance.
For \textit{given} $\mu,\nu_{\phi}\in\mathscr{P}(\RR^n)$ with all moments finite and satisfying
Assumption~\ref{ass: mvCarleman}, the total variation distance is defined as follows:
\begin{equation}\label{def: total_variation}
    \tau^{\tv} := \|\mu-\nu_\phi\|_{\tv}
    = 2\sup_{B\in\mathcal{B}(\RR^n)}|\mu(B)-\nu_\phi(B)|.
\end{equation}
In Lemma \ref{prop:multivariate-carleman}, which can be considered as a contribution of independent interest, we prove that commonly used mixture families satisfy Assumption \ref{ass: mvCarleman}. Moreover, using the Hahn--Jordan decomposition (see Appendix~\ref{app: auxiliary}) of the signed measure
$\mu-\nu_\phi$, one can show that $\tau^{\tv}$ is also the optimal value of the
infinite-dimensional linear program
\begin{subnumcases}
{\tau^{\tv} :=
\label{eq: tv-lp_non_reg}}
\inf\limits_{\psi_+,\psi_-\in\mathscr{M}_+(\RR^n)} \ \psi_+(1)+\psi_-(1)\\
\text{s.t.}\;\psi_+-\psi_-=\mu-\nu_\phi,\\
\phantom{\text{s.t.}\;}\psi_+\le \mu,\; \psi_-\le \nu_\phi.
\end{subnumcases}
Optimizing over all mixing measures $\phi\in\mathscr{P}(S_{\btheta})$ then leads to the following \textit{$\varepsilon$-regularized formulation} for computing the TV-optimal mixtures:
\begin{subnumcases}
{\tau^{\tv}_{\varepsilon,R} :=
 \label{eq: tv-lp}}
\inf_{\psi_+,\psi_-, \phi}
    \psi_+(1)+\psi_-(1) + \varepsilon L_{\bphi}(R)\\
    \text{s.t.} \;
    \psi_+-\psi_-=\mu-\nu_{\phi}, \\
    \phantom{\text{s.t.} \;}\psi_+\leq\mu, \; \psi_-\leq\nu_{\phi}\label{dom constraints},
\end{subnumcases}
with $\psi_+,\psi_- \in\mathscr{M}_+(\RR^n),\ \phi\in\mathscr{P}(S_{\btheta})$. Domination constraints {\color{black}like $\psi_-\leq \nu_{\phi}$ (i.e., $\nu_{\phi}-\psi_-\in\mathcal{M}_{+}(\RR^n)$)} translate to the moment matrices:
{\color{black}$\psi_- \le \nu_\phi \;\iff\; \MM_d(p;\bphi)\succeq {\bf M}_d(\psi_-)$}, for all $d\in\NN$, where $\MM_d(p;\bphi)(\balpha,\bbeta) := L_{\bphi}(p_{\balpha+\bbeta}), 
    \:\balpha,\bbeta \in \NN^n_d, \: d\in\NN$, stands for the moment matrix of the 
    unknown mixture $\nu_\phi$. 
\\
Therefore, for any $d\ge d_{\min}$ we
collect all truncated moments (up to degree $2d$) of $\psi_+$, $\psi_-$ and
$\phi$ into a single vector $
\y_d^{\tv} := (\bpsip,\bpsim,\bphi) \in \RR^{m_{n,p,d}}$ {\color{black}of now \emph{pseudo-moments}}, and we define the $\varepsilon$-\emph{regularized $\tv$-moment relaxation of order $d$}:
\begin{subnumcases}
{\tau^{\tv}_{d,\varepsilon,R}:=\label{eq:TV_SDP_instance}}
\min_{\y_d^{\tv}} \;\langle \mathbf{c}_{d,\varepsilon,R}^{\tv}, \y_d^{\tv}\rangle\\
\text{s.t.}\;
     \mathbf{A}_d^{\tv}\y_d^{\tv}= \mathbf{b}_d^{\tv},\;
     \mathbf{M}_d^{\tv}(\y_d^{\tv})\succeq \zer.
\end{subnumcases}
SDP problem (\ref{eq:TV_SDP_instance}) is therefore determined by:
\begin{itemize}[leftmargin=1.2em]
    \item the cost vector $\mathbf{c}_d^{\tv}$ representing the objective
    $\psi_+(1)+\psi_-(1)+\varepsilon L_{\bphi}(R)$;
    \item the linear operator $\mathbf{A}_d^{\tv}$ encoding, for every multi-index $\balpha\in\NN^n_{2d}$, the moment-matching equality
    $\psipbalp-\psimbalp-\mu_{\balpha}+L_{\bphi}(p_{\balpha})=0$:
    \item the block-diagonal matrix $\mathbf{M}_d^{\tv}(\y_d^{\dist})$, with blocks
    enforcing $\mathbf{M}_d(\bmu)\succeq\mathbf{M}_d(\bpsip)\succeq \zer$,
    $\mathbf{M}_d(p;\bphi)\succeq\mathbf{M}_d(\bpsim)\succeq \zer$, and
    $\mathbf{M}_{d-d_j}(r_j\bphi)\succeq 0$ for $j\in\set{0,\dots,l}$.
\end{itemize}
\begin{thm}
\label{thm: convergence tv}
Assume that $S_{\btheta}$ in (\ref{def: domain}) is compact and that Assumptions~\ref{ass: mvCarleman}
and~\ref{ass: polynomialmoments} hold. Let
$\y_d^{*}=(\bpsipstard,\bpsimstard,\bphi^{*(d)})$
be an optimal solution of (\ref{eq:TV_SDP_instance}) for some $d\ge d_{\min}$,
and assume that
\begin{equation}\label{eq: flat extention tv}
    \operatorname{rank}\MM_d(\bphi^{*(d)})
      =\operatorname{rank}\MM_{d-d_{\min}}(\bphi^{*(d)})=K
\end{equation}
for some $K\in\NN^{*}$. Then the hierarchy stabilizes at level $d$, i.e.,\
$\tau^{\tv}_{d,\varepsilon,R} = \tau^{\tv}_{\varepsilon,R}$, and there exists
a $K$-atomic mixing measure $\phi^{*}$ supported on $S_{\btheta}$ such that the
induced  mixture $\nu_{\phi^{*}}$ solves the $\varepsilon$-regularized
$\tv$-distance approximation problem (\ref{eq: tv-lp}) with target $\mu$.

\end{thm}
If the sufficient rank-condition in Theorem \ref{thm: convergence tv} is not satisfied, one can still show that $\lim_{d\to\infty}\tau^{TV}_{d,\varepsilon,R}=\tau^{TV}_{\varepsilon,R}$, where the measure domination constraints (\ref{dom constraints}) are crucial for proving convergence (Appendix \ref{appendix: A-TV}, Theorem \ref{thm: full-convergence-tv}). In addition, every accumulation point $\phi^*$ (not necessarily atomic) of the sequence $(\phi^{*(d)})_{d\geq d_{\min}}$ defines a mixture $\nu_{\phi^*}$ that is optimal for the $\varepsilon$-regularized formulation defined in (\ref{eq: tv-lp}).


\subsection{From SDP to clustering: Algorithm construction}
\label{sec: algorithm}
\noindent
In light of Theorems \ref{thm: convergence w2} and \ref{thm: convergence tv}, 
finite convergence at some relaxation order $d$ can be detected through a rank condition on the optimal moment matrix $\MM_d(\bphi^{*(d)})$. 
In practice, however, numerical sensitivities make the exact rank computation unreliable. 
We estimate the mixture order $\widehat K\in\NN^*$ using a cumulative-energy criterion applied to
the spectrum of $\MM_d(\bphi^{*(d)})$:
\begin{subnumcases}
{\widehat K := 
\label{eq: rank estimation}}
\min\limits_{r\in\{1,\dots,s(p,d)\}}\ r\\
\text{s.t.}\quad \sum_{j=1}^{r}\eta_j \;\ge\; (1-\texttt{tol})\sum_{j=1}^{s(p,d)}\eta_j,
\end{subnumcases}
 where $(\eta_j)_{j\in\set{1,\dots,s(p,d)}}$ are eigenvalues of $\MM_d(\bphi^{*(d)})$ arranged in the decreasing order, and $\texttt{tol}>0$ is a tolerance threshold.
 Algorithm \ref{alg:extract_CFHL} allows us then to recover a parameter estimate $\widehat{\btheta}_{\tv,i}$ or $\widehat{\btheta}_{\was,i}$ for each $i\in\set{1,\dots,\widehat{K}}$.  We summarize this procedure in Algorithm \ref{alg:extract}.
\begin{algorithm}[H]
\caption{Mixture order and parameter extraction from SDP relaxations}
\label{alg:extract}
\begin{algorithmic}[1]
\REQUIRE Order $d\ge d_{\min}$, moments $\{\mu^N_{\balpha}\}_{|\balpha|\le 2d}$, theoretical moments $p_{\balpha}$ of a family $\{\mu_{\btheta}\}_{\btheta}$, support description $(r_j)_{j\in\{1,\dots,l\}}\in\RR[\btheta]$, ${\rm dist}\in\{\was,\tv\}$, regularization $\varepsilon>0$, $\texttt{tol}>0$.
\STATE Depending on ${\rm dist}$, solve (\ref{eq:W2_SDP_instance}) or (\ref{eq:TV_SDP_instance}) at order $d$ and recover $\bphi^{*(d)}$.
\IF{$\rk \MM_d(\bphi^{*(d)})=\rk \MM_{d-d_{\min}}(\bphi^{*(d)})$}
  \STATE Estimate $\widehat{K}$ via (\ref{eq: rank estimation}).
  \STATE Extract $\widehat{\btheta}_{\tv}$ or $\widehat{\btheta}_{\was}$ using Algorithm~\ref{alg:extract_CFHL}.
\ELSE
  \STATE Increase $d$ and repeat from Step 1.
\ENDIF
\ENSURE $\widehat K$, and $\{\widehat\btheta_{\tv,i}\}_{i=1}^{\widehat K}$ or $\{\widehat\btheta_{\was,i}\}_{i=1}^{\widehat K}$.
\end{algorithmic}
\end{algorithm}

If flatness fails and increasing $d$ is prohibitive, one may still extract an approximate solution via a Gelfand--Naimark--Segal (GNS) construction for \emph{nearly flat} pseudo-moment matrices \citep{KlepGNS2018}. In some cases, it suffices to check rank on a principal submatrix of $\MM_d(\bphi^{*(d)})$ involving only selected parameters (e.g., Gaussian means), reducing size and improving numerical stability.

\section{Experimental results}
\label{sec: exp}
\noindent

In this section, and as a first potential application of the methodology, we report some preliminary results when using
Algorithm \ref{alg:extract} in some clustering problems. Exhaustive details of the numerical setup are provided in Appendix \ref{appendix: C}.
\paragraph{Illustration.} To have a better understanding of Algorithm~\ref{alg:extract}'s behavior,  we first consider an elementary example with no clusters, 
namely when
$\mu$ is the univariate uniform distribution on $[0,1]$, accessible via $N=2000$ randomly drawn samples. We compare our method with likelihood-based approaches 
(optimized via the EM algorithm) for fitting GMMs, in which the mixture order $\widehat{K}$ is estimated either using the BIC or cross-validation (CV) 
criterion. 

 \begin{figure}[h]
  \centering
  \includegraphics[scale=0.55]{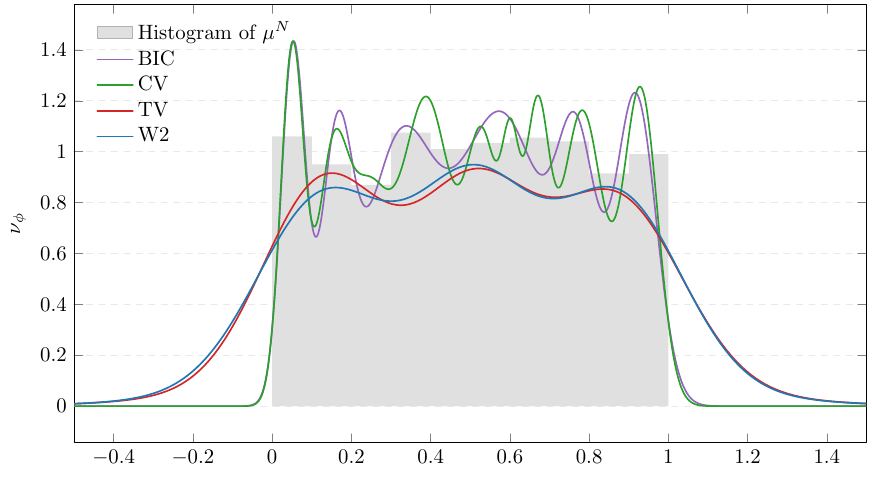}
  \caption{Fitting GMMs to a sample from $d\mu(x) = \mathds{1}_{[0,1]}(x)\,dx$. Blue and red densities are obtained by computing $\tau^{\was}_{4,0,R}$ and $\tau^{\tv}_{4,0,R}$, with $S_{m,\sigma}=\set{(m,\sigma)\:|\: m\geq m^2, \sigma\geq\sigma^2}=[0,1]^2$.}
  \label{fig:illustration}
\end{figure}

From Figure~\ref{fig:illustration}, we observe that mixtures recovered by both $\was$ and $\tv$ are  smoother (consisting of $\widehat{K}=4$ components each) than the ones based on EM, even without regularization. Moreover, since we obtained $\tau^{\was}_{4,0,R}=\tau^{\tv}_{4,0,R}=0$, our relaxations recover mixtures $\nu_{\phi^{*}}$ whose moments are indistinguishable from those of $\mu^N$, up to order $2d=8$. Given that parameters estimates recovered by EM-based approaches belong to $S_{m,\sigma}$, we can guarantee that those EM-induced mixtures cannot be better (in $\varepsilon$-regularized $\was$ or $\tv$-distance sense) than the ones recovered by Alg. \ref{alg:extract}.
\paragraph{Randomly generated instances.} We consider Gaussian mixtures in $\RR^2$ with a varying number of components $K$. 
Two key geometric parameters in this setting are the \emph{separability} $c>0$, controlling the relative distance between component means, 
and the \emph{eccentricity} $\epsilon(\Sig)>0$, defined as the ratio of the smallest to the largest eigenvalue of the covariance matrices \citep{Dasgupta1999learningmixtures}. For $K \in \{2,5\}$, we generate 50 independent mixture parameter configurations and draw $N=1000$ samples from each. We then extract $\widehat K$,  $\set{\widehat\btheta_{\tv,i}}_{i=1}^{\widehat K}$ and $\set{\widehat\btheta_{\was,i}}_{i=1}^{\widehat K}$ using Algorithm \ref{alg:extract} with $d=4$. We initialize both $k$-means and EM algorithm\protect\footnotemark\footnotetext{EM iteratively estimates GMM parameters by alternating between: (\textit{i}) E-step that computes soft assignments of data points to each component and (\textit{ii}) M-step that updates the parameters using these assignments.} for fitting GMMs with our extracted Gaussian means parameter estimates and assess whether this leads to improved performance compared to random initializations. Mixture parameters were generated to satisfy $c=5$ (relatively well separated) and $\epsilon(\Sig)=0.25$ (non-spherical distributions). Obtained results are presented in Figure \ref{fig: sep-nonsphe}.
\begin{figure*}[t]
  \centering
  \includegraphics[width=0.725\textwidth]{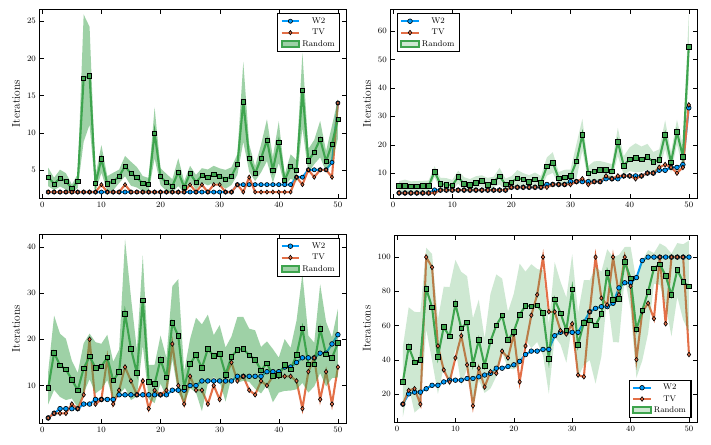}
  \caption{Impact of Algorithm~\ref{alg:extract}-based initialization on the number of iterations required by $k$-means (left column) and EM (right column).
  Results are shown for mixtures with $K=2$ (top row) and $K=5$ (bottom row).
  Experiments with random initialization were repeated 100 times (green squares: average; shaded bands: one standard deviation).
  The $x$-axis represents mixture indices, sorted in ascending order of the iteration count under the $\was$ initialization.  Finally, assembling and solving the $\was$-based (resp.\ $\tv$-based) SDP relaxations takes on average $0.1755\,\text{s}$ (resp.\ $0.1506\,\text{s}$), whereas the point-extraction step (Algorithm~\ref{alg:extract}) requires only $0.00036\,\text{s}$ on average.
 }
  \label{fig: sep-nonsphe}
\end{figure*}
When GMM components are moderately separated and non-spherical, Algorithm~\ref{alg:extract}-based initializations clearly outperform $\texttt{Random}$.
For $K=2$ (top row), both $k$-means and EM converge in very few iterations when initialized with $\widehat{\bm{m}}_{\was}$ or $\widehat{\bm{m}}_{\tv}$, with little variability across mixtures. In contrast, random initialization is slower and regularly exhibits a very large variance in iteration counts.

Mixtures with $K=5$ (bottom row) components are more challenging, which is witnessed by increased average iteration counts across all initializations. Our methods, however, remain more efficient around $90\%$ (resp. $67\%$) of time  for $k$-means (resp. EM). Indeed, the $\widehat{\bm{m}}_{\was}$-based (resp. $\widehat{\bm{m}}_{\tv}$-based) $k$-means initializations require on average $31.8\%$ (resp. $36.5\%$) fewer iterations compared to random initialization.
Interestingly, EM algorithm appears to favor $\widehat{\bm{m}}_{\was}$-based initializations ($18.63\%$ improvement compared to $\texttt{Random}$) to the ones that are $\widehat{\bm{m}}_{\tv}$-based ($13.91\%$ improvement compared to $\texttt{Random}$). These findings call for a clearer understanding of which problem classes and algorithmic settings favor one distance type over the other.
\paragraph{MNIST data.} We evaluate the performance of our approach on subsets of the MNIST dataset ($28\times28$ images), applying PCA to retain only 
$2$ principal components. We select Gaussian mixtures as the modeling family in Algorithm~\ref{alg:extract}. Impact on $k$-means performance is given in Figure \ref{fig:kmeans-iter-sidebyside}. 
\begin{figure*}[!b]
  \centering
  \begin{subfigure}{0.375\linewidth}
    \centering
    \resizebox{\linewidth}{!}{

\begin{tikzpicture}[/tikz/background rectangle/.style={fill={rgb,1:red,1.0;green,1.0;blue,1.0}, fill opacity={1.0}, draw opacity={1.0}}, show background rectangle]
\begin{axis}[legend cell align={left}, legend columns={1}, title={}, title style={at={{(0.5,1)}}, anchor={south}, font={{\fontsize{14 pt}{18.2 pt}\selectfont}}, color={rgb,1:red,0.0;green,0.0;blue,0.0}, draw opacity={1.0}, rotate={0.0}, align={center}}, legend style={color={rgb,1:red,0.0;green,0.0;blue,0.0}, draw opacity={1.0}, line width={1}, solid, fill={rgb,1:red,1.0;green,1.0;blue,1.0}, fill opacity={1.0}, text opacity={1.0}, font={{\fontsize{8 pt}{10.4 pt}\selectfont}}, text={rgb,1:red,0.0;green,0.0;blue,0.0}, cells={anchor={center}}, at={(0.98, 0.98)}, anchor={north east}}, axis background/.style={fill={rgb,1:red,1.0;green,1.0;blue,1.0}, opacity={1.0}}, anchor={north west}, xshift={1.0mm}, yshift={-1.0mm}, width={145mm}, height={100mm}, scaled x ticks={false}, xlabel={$\text{Iterations}$}, x tick style={color={rgb,1:red,0.0;green,0.0;blue,0.0}, opacity={1.0}}, x tick label style={color={rgb,1:red,0.0;green,0.0;blue,0.0}, opacity={1.0}, rotate={0}}, xlabel style={at={(ticklabel cs:0.5)}, anchor=near ticklabel, at={{(ticklabel cs:0.5)}}, anchor={near ticklabel}, font={{\fontsize{11 pt}{14.3 pt}\selectfont}}, color={rgb,1:red,0.0;green,0.0;blue,0.0}, draw opacity={1.0}, rotate={0.0}}, xmajorgrids={false}, xmin={0.7639999999999976}, xmax={23.236000000000004}, xticklabels={{$5$,$10$,$15$,$20$}}, xtick={{5.0,10.0,15.0,20.0}}, xtick align={inside}, xticklabel style={font={{\fontsize{8 pt}{10.4 pt}\selectfont}}, color={rgb,1:red,0.0;green,0.0;blue,0.0}, draw opacity={1.0}, rotate={0.0}}, x grid style={color={rgb,1:red,0.0;green,0.0;blue,0.0}, draw opacity={0.1}, line width={0.5}, solid}, xticklabel pos={left}, x axis line style={color={rgb,1:red,0.0;green,0.0;blue,0.0}, draw opacity={1.0}, line width={1}, solid}, scaled y ticks={false}, ylabel={$\text{Frequency}$}, y tick style={color={rgb,1:red,0.0;green,0.0;blue,0.0}, opacity={1.0}}, y tick label style={color={rgb,1:red,0.0;green,0.0;blue,0.0}, opacity={1.0}, rotate={0}}, ylabel style={at={(ticklabel cs:0.5)}, anchor=near ticklabel, at={{(ticklabel cs:0.5)}}, anchor={near ticklabel}, font={{\fontsize{11 pt}{14.3 pt}\selectfont}}, color={rgb,1:red,0.0;green,0.0;blue,0.0}, draw opacity={1.0}, rotate={0.0}}, ymajorgrids={false}, ymin={0.0}, ymax={42.0}, yticklabels={{$0$,$10$,$20$,$30$,$40$}}, ytick={{0.0,10.0,20.0,30.0,40.0}}, ytick align={inside}, yticklabel style={font={{\fontsize{8 pt}{10.4 pt}\selectfont}}, color={rgb,1:red,0.0;green,0.0;blue,0.0}, draw opacity={1.0}, rotate={0.0}}, y grid style={color={rgb,1:red,0.0;green,0.0;blue,0.0}, draw opacity={0.1}, line width={0.5}, solid}, yticklabel pos={left}, y axis line style={color={rgb,1:red,0.0;green,0.0;blue,0.0}, draw opacity={1.0}, line width={1}, solid}, colorbar={false}]
    \addplot[color={rgb,1:red,0.2422;green,0.6433;blue,0.3044}, name path={17}, area legend, fill={rgb,1:red,0.2422;green,0.6433;blue,0.3044}, fill opacity={0.45}, draw opacity={0.45}, line width={1}, solid]
        table[row sep={\\}]
        {
            \\
            2.0  1.0  \\
            2.0  0.0  \\
            4.0  0.0  \\
            4.0  1.0  \\
            2.0  1.0  \\
        }
        ;
    \addlegendentry {$\text{Random}$}
    \addplot[color={rgb,1:red,0.2422;green,0.6433;blue,0.3044}, name path={17}, area legend, fill={rgb,1:red,0.2422;green,0.6433;blue,0.3044}, fill opacity={0.45}, draw opacity={0.45}, line width={1}, solid, forget plot]
        table[row sep={\\}]
        {
            \\
            4.0  35.0  \\
            4.0  0.0  \\
            6.0  0.0  \\
            6.0  35.0  \\
            4.0  35.0  \\
        }
        ;
    \addplot[color={rgb,1:red,0.2422;green,0.6433;blue,0.3044}, name path={17}, area legend, fill={rgb,1:red,0.2422;green,0.6433;blue,0.3044}, fill opacity={0.45}, draw opacity={0.45}, line width={1}, solid, forget plot]
        table[row sep={\\}]
        {
            \\
            6.0  42.0  \\
            6.0  0.0  \\
            8.0  0.0  \\
            8.0  42.0  \\
            6.0  42.0  \\
        }
        ;
    \addplot[color={rgb,1:red,0.2422;green,0.6433;blue,0.3044}, name path={17}, area legend, fill={rgb,1:red,0.2422;green,0.6433;blue,0.3044}, fill opacity={0.45}, draw opacity={0.45}, line width={1}, solid, forget plot]
        table[row sep={\\}]
        {
            \\
            8.0  9.0  \\
            8.0  0.0  \\
            10.0  0.0  \\
            10.0  9.0  \\
            8.0  9.0  \\
        }
        ;
    \addplot[color={rgb,1:red,0.2422;green,0.6433;blue,0.3044}, name path={17}, area legend, fill={rgb,1:red,0.2422;green,0.6433;blue,0.3044}, fill opacity={0.45}, draw opacity={0.45}, line width={1}, solid, forget plot]
        table[row sep={\\}]
        {
            \\
            10.0  7.0  \\
            10.0  0.0  \\
            12.0  0.0  \\
            12.0  7.0  \\
            10.0  7.0  \\
        }
        ;
    \addplot[color={rgb,1:red,0.2422;green,0.6433;blue,0.3044}, name path={17}, area legend, fill={rgb,1:red,0.2422;green,0.6433;blue,0.3044}, fill opacity={0.45}, draw opacity={0.45}, line width={1}, solid, forget plot]
        table[row sep={\\}]
        {
            \\
            12.0  4.0  \\
            12.0  0.0  \\
            14.0  0.0  \\
            14.0  4.0  \\
            12.0  4.0  \\
        }
        ;
    \addplot[color={rgb,1:red,0.2422;green,0.6433;blue,0.3044}, name path={17}, area legend, fill={rgb,1:red,0.2422;green,0.6433;blue,0.3044}, fill opacity={0.45}, draw opacity={0.45}, line width={1}, solid, forget plot]
        table[row sep={\\}]
        {
            \\
            14.0  1.0  \\
            14.0  0.0  \\
            16.0  0.0  \\
            16.0  1.0  \\
            14.0  1.0  \\
        }
        ;
    \addplot[color={rgb,1:red,0.2422;green,0.6433;blue,0.3044}, name path={17}, area legend, fill={rgb,1:red,0.2422;green,0.6433;blue,0.3044}, fill opacity={0.45}, draw opacity={0.45}, line width={1}, solid, forget plot]
        table[row sep={\\}]
        {
            \\
            16.0  0.0  \\
            16.0  0.0  \\
            18.0  0.0  \\
            18.0  0.0  \\
            16.0  0.0  \\
        }
        ;
    \addplot[color={rgb,1:red,0.2422;green,0.6433;blue,0.3044}, name path={17}, area legend, fill={rgb,1:red,0.2422;green,0.6433;blue,0.3044}, fill opacity={0.45}, draw opacity={0.45}, line width={1}, solid, forget plot]
        table[row sep={\\}]
        {
            \\
            18.0  0.0  \\
            18.0  0.0  \\
            20.0  0.0  \\
            20.0  0.0  \\
            18.0  0.0  \\
        }
        ;
    \addplot[color={rgb,1:red,0.2422;green,0.6433;blue,0.3044}, name path={17}, area legend, fill={rgb,1:red,0.2422;green,0.6433;blue,0.3044}, fill opacity={0.45}, draw opacity={0.45}, line width={1}, solid, forget plot]
        table[row sep={\\}]
        {
            \\
            20.0  1.0  \\
            20.0  0.0  \\
            22.0  0.0  \\
            22.0  1.0  \\
            20.0  1.0  \\
        }
        ;
    \addplot[color={rgb,1:red,0.2422;green,0.6433;blue,0.3044}, name path={18}, only marks, draw opacity={0.45}, line width={0}, solid, mark={*}, mark size={0.0 pt}, mark repeat={1}, mark options={color={rgb,1:red,0.0;green,0.0;blue,0.0}, draw opacity={0.0}, fill={rgb,1:red,0.0;green,0.6056;blue,0.9787}, fill opacity={0.0}, line width={0.75}, rotate={0}, solid}, forget plot]
        table[row sep={\\}]
        {
            \\
            3.0  1.0  \\
            5.0  35.0  \\
            7.0  42.0  \\
            9.0  9.0  \\
            11.0  7.0  \\
            13.0  4.0  \\
            15.0  1.0  \\
            17.0  0.0  \\
            19.0  0.0  \\
            21.0  1.0  \\
        }
        ;
    \addplot[color={rgb,1:red,0.0;green,0.6056;blue,0.9787}, name path={19}, draw opacity={1.0}, line width={2}, solid]
        table[row sep={\\}]
        {
            \\
            5.0  -42.0  \\
            5.0  84.0  \\
        }
        ;
    \addlegendentry {$\operatorname{W2}$}
    \addplot[color={rgb,1:red,0.8889;green,0.4356;blue,0.2781}, name path={20}, draw opacity={1.0}, line width={2}, solid]
        table[row sep={\\}]
        {
            \\
            3.0  -42.0  \\
            3.0  84.0  \\
        }
        ;
    \addlegendentry {$\operatorname{TV}$}
\end{axis}
\end{tikzpicture}}
    \caption{Digits $\set{0,1}$}
    \label{fig:kmeans-iter-all}
  \end{subfigure}
  \begin{subfigure}{0.375\linewidth}
    \centering
    \resizebox{\linewidth}{!}{\input{kmeans_iter_hist_012_2comp.tikz}}
    \caption{Digits $\set{0,1,2}$}
    \label{fig:kmeans-iter-012}
  \end{subfigure}
  \caption{Effect of Algorithm~\ref{alg:extract}-based initialization (with $d=4$) on the number of iterations required by $k$-means on MNIST, compared against 100 $\texttt{Random}$ initializations. }
  \label{fig:kmeans-iter-sidebyside}
\end{figure*}
Algorithm \ref{alg:extract} initializations reach the same $k$-means objective as random initialization (left: $\approx 4.136\times 10^{2}$, right: $\approx 6.734\times 10^{2}$) while requiring fewer iterations. 
For digits $\{0,1\}$ (resp. $\{0,1,2\}$), $\texttt{Random}$ needs on average $7.28$ (resp. $11.6$) iterations.
The per-run time of $k$-means scales as $\mathcal{O}(T\, N K n)$, where $T$ is the number of
iterations. Lowering $T$ therefore delivers near-linear savings, which can be very beneficial, especially in high-dimensional settings.


\section{Conclusion and Future Work} 
In this work, we have introduced two regularized Moment-SOS-type hierarchies of SDP relaxations for approximating (via either $\was$ or $\tv$ distance minimization) an arbitrary given measure by mixtures from specific parametric families. The framework is highly general, requiring only minimal assumptions on the underlying measure. In particular, and in contrast to much of the existing literature, we neither assume that the mixing measure is atomic, nor that the mixture parameters lie in a finite set, nor that the unknown measure is compactly supported. We have established strong convergence guarantees (asymptotic, and sometimes even finite) for the proposed method and illustrated its efficiency  through an application to clustering.

A main limitation is moderate scalability: when $n$ or $p$ is large, PSD constraints grow quickly with $d$, so only low-order relaxations are tractable and may lack precision. Future work could improve scalability by exploiting family-specific structure (e.g., symmetry detection \citep{Riener_2013}) or working in lower-dimensional spaces (e.g., sliced formulations \citep{Kolouri2017SlicedWD}); preliminary results in Appendix~\ref{appendix: B} suggest this can be effective.

Moreover, one can explore alternative regularizers and systematically assess their impact on solution recovery. This may clarify the interplay between $\was$ and $\tv$ and extend to other probability distances admitting semi-algebraic representations. Another direction is to quantify convergence rates as a function of sample size.

Finally, given its modeling power, we believe that our approach can be useful
beyond clustering, especially in applications where existence of a principled quantitative estimate of
the discrepancy between probability measures is crucial.
\section*{Impact Statement}
\textit{This paper presents work whose goal is to advance the field of machine learning. There are many potential societal consequences of our work, none of which we feel must be specifically highlighted here.}

    \section*{Acknowledgments}
\textit{This work was supported by the European Union’s HORIZON–MSCA-2023-DN-JD programme under the Horizon Europe (HORIZON) Marie Sklodowska-Curie Actions, grant agreement 101120296 (TENORS), the AI Interdisciplinary Institute ANITI funding, through the French
“France 2030” program under the Grant agreement n°ANR-23-IACL-0002 as well as by the National Research Foundation, Prime Minister’s Office, Singapore under its Campus for Research
Excellence and Technological Enterprise (CREATE) programme.}



\bibliographystyle{icml2026}
\bibliography{MixturesSDP}

\appendix
\onecolumn
\section{Theoretical guarantees}
\label{appendix: A}
In this section, we explicitly derive the dual formulations of the relaxation problems presented in Section \ref{sec: relaxations}. These dual problems are important computationally, since the software we use returns optimal primal solutions from optimal dual variables. We then establish the convergence of the proposed hierarchies.
\\
Before presenting our main results, we introduce several auxiliary results that are essential for a complete understanding of our approach.
\subsection{Dual side of the moment hierarchy}
To each moment relaxation of the Lasserre hierarchy \cite{Lasserre2009Book} we can associate a dual sum-of-squares (SOS) relaxation. For the sake of completeness, we briefly describe basic ingredients that are necessary to define these dual SOS relaxations. \\
Recall that a polynomial $f \in \RR_{2d}[\btheta]$ is called a sum-of-squares (SOS) if there exist $k \in \NN^*$ and polynomials $p_1, \dots, p_k \in \RR_d[\btheta]$ such that $f = p_1^2 + \cdots + p_k^2$. Furthermore, any SOS polynomial admits a representation of the form $f = \vb_d^\top {\bf G} \vb_d$, where ${\bf G} \in \mathcal{S}_+^{s(p,d)}$ is known as the \emph{Gram matrix}.
\\
Let $\Sigma[\btheta]\subset\RR[\btheta]$ (resp. $ \Sigma_{d}[\btheta]\subset\RR_{2d}[\btheta]$) denote the set of SOS polynomials (resp. SOS polynomials of degree at most $2d$). Of course, every SOS polynomial is non-negative. However, the converse is not true.\\
Recall that $r_0$ denotes the constant polynomial $\RR^p\ni\btheta \mapsto r_0(\btheta) := 1$. To any given family $\set{r_1, \ldots, r_l} \subset \RR[\btheta]$, we associate the \textit{quadratic module} $\mathcal{Q}(r) = \mathcal{Q}(r_1, \ldots, r_l) \subset \RR[\x]$ defined by
\begin{equation}\label{eq: QM}
   \mathcal{Q}(r) := \left\{ \sum_{j=0}^l \sigma_j r_j : \sigma_j \in \Sigma[\btheta], \; j \in\set{ 0, \ldots, l}\right\}
\end{equation}
and its degree-$2d$ truncated version
\begin{equation}\label{eq: truncatedQM}
   \mathcal{Q}_d(r) := \left\{ \sum_{j=0}^l \sigma_j r_j : \sigma_j \in \Sigma_{d-d_j}[\btheta], \; j \in\set{ 0, \ldots, l}\right\},
\end{equation}
where $d_j = \left\lceil \frac{\deg(r_j)}{2} \right\rceil$ for all $j\in\set{0,\ldots,l}$. Obviously, membership in $\mathcal{Q}_d(r)$ is a clear certificate of positivity over the semi-algebraic set $\set{\btheta\in\RR^p \:\mid\: r_j(\btheta)\geq0, \: j\in\set{1,\dots,l}}$ .
\subsection{Auxiliary results}\label{app: auxiliary} \paragraph{Extracting minimizers.} Finite convergence of the Lasserre hierarchy occurs whenever the so-called \emph{flat extension} condition is satisfied at some order $d \geq d_{\min}$. 
In this case, global minimizers of the original problem can be recovered using Algorithm \ref{alg:extract_CFHL} (see \citep{detectingHenrionLasserre, CurtoFialkow2005TruncatedKMoment}), which we reproduce here for completeness.
\begin{algorithm}[H] 
\caption{Minimizer extraction}
\label{alg:extract_CFHL}
\begin{algorithmic}[1]
\REQUIRE Moment matrix $\MM_d(\bphi^{*(d)})$ of rank $K$ satisfying the flatness condition~\eqref{eq: flat extention}.
\ENSURE Points $\widehat{\btheta}_{\dist,i}\in S_{\btheta}$ for $i\in\{1,\dots,K\}$ that are global minimizers of~\eqref{eq:W2_SDP_instance} or~\eqref{eq:TV_SDP_instance}.
\STATE Compute the Cholesky factorization $CC^\top=\MM_d(\bphi^{*(d)})$.
\STATE Reduce $C$ to a column-echelon form $U$.
\STATE Compute from $U$ the multiplication matrices $N_j$ for $j\in\{1,\dots,p\}$.\footnotemark
\STATE Set $N:=\sum_{j=1}^p \kappa_j N_j$ with randomly generated coefficients $\kappa_j$.
\STATE Compute the Schur decomposition $N=QTQ^\top$.
\STATE Let $\{q_i\}_{i=1}^K$ be the first $K$ column vectors of $Q$.
\STATE Return $\widehat{\theta}_{j,\dist,i}:=q_i^\top N_j q_i$ for all $i\in\{1,\dots,K\}$ and $j\in\{1,\dots,p\}$.
\end{algorithmic}
\end{algorithm}

\footnotetext{Multiplication matrices $N_j$ encode the action of multiplication by $\theta_j$ on the quotient basis induced by the flat extension condition; they can be obtained by solving a linear system that expresses products of basis monomials modulo $\ker(\MM_d(\bphi^{*(d)}))$.}

\noindent
\textbf{Hahn-Jordan decomposition.} 
Given  a \emph{signed} measure $\phi$ on $S\in\mathcal{B}(\RR^p)$, there always exist
$P,N\in\mathcal{B}(\RR^p)$  satisfying $ P\,\cup\,N\,=\,S$ and $P\,\cap\,N\,=\,\emptyset$, and such that
\begin{equation}
\phi(E)\,\geq0\,,\quad\forall E\in\mathcal{B}(\RR^p)\,,\,E\subseteq 
P\quad\text{and}\quad\phi(E)\,\leq0\,,\quad\forall E\in\mathcal{B}(\RR^p)\,,\,E\subseteq N.
\end{equation}
There also exists  a \emph{unique} decomposition of the form $\phi=\phi^+-\phi^-$, with $\phi^+$ and $\phi^-$ being two positive measures,
such that for every $E\in\mathcal{B}(\RR^p)$:
\begin{equation}
    \phi^+(E)\,=\,\phi(E\cap P), \,\quad\text{and}\quad \phi^-(E)\,=\,-\phi(E\cap N).
\end{equation}
In addition, $\Vert\phi\Vert_{TV}=\phi^+(S)+\phi^-(S)$. In the case where  $\phi=\mu-\nu$ for two positive measures $\mu$ and $\nu$, we also have the following measure domination relations: 
$\mu\geq\phi^+$ and $\nu\geq\phi^-$.
\paragraph{Multivariate Carleman condition for mixture models.} In Lemma \ref{prop:multivariate-carleman}, we establish a general result for mixtures of parametric probability distributions whose parameters lie in a compact basic semialgebraic set. 
This result is a key ingredient in our convergence proofs and constitutes an independent contribution.
\begin{lemma}[Multivariate Carleman condition for mixture models]
\label{prop:multivariate-carleman}
Let $\nu_\phi \in \mathscr{P}(\mathbb{R}^n)$ be a mixture distribution with mixing measure $\phi\in\mathscr{P}(S_{\btheta})$, with   
$S_{\btheta} \subset \mathbb{R}^p$ as in (\ref{def: domain}) compact.
For every multi-index $\balpha \in \mathbb{N}^n$, denote by 
$p_{\balpha} \in \mathbb{R}_{|\balpha|}[\btheta]$ the polynomial 
representation of the $\balpha$-th moment of $\nu_{\phi}$ (i.e. $\int x^{\balpha}d\nu_\phi=\int_{S_{\btheta}}p_{\balpha}(\btheta)d\phi{\btheta}$). 
Then, for each coordinate direction $\bm{e}_i \in \mathbb{N}^n$, $i=\set{1,\dots,n}$, we have
\begin{equation}\label{eq:multivar-carleman}
    \sum_{k=1}^{\infty} \Big( L_{\bphi}\big(p_{2k \bm{e}_i}\big)\Big)^{-\tfrac{1}{2k}} = +\infty\,,
\end{equation}
and therefore $\nu_\phi$ is moment-determinate.
\end{lemma}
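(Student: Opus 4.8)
The plan is to reduce the claimed divergence to a \emph{uniform} growth bound on the one–dimensional moments of the component family, and then to compare the resulting series with a divergent harmonic-type series. First I would identify the integrand explicitly. Since $\nu_\phi(B)=\int_{S_{\btheta}}\mu_\btheta(B)\,d\phi(\btheta)$ and $x_i^{2k}\ge 0$, Tonelli's theorem gives $\int x_i^{2k}\,d\nu_\phi=\int_{S_{\btheta}}\big(\int x_i^{2k}\,d\mu_\btheta\big)\,d\phi(\btheta)$, which by Assumption~\ref{ass: polynomialmoments} equals $L_{\bphi}(p_{2k\bm{e}_i})$. Specializing the identity to $\phi=\delta_{\btheta}$ (which lies in $\mathscr{P}(S_{\btheta})$ for each $\btheta\in S_{\btheta}$) shows that
\[
M_{2k}(\btheta):=p_{2k\bm{e}_i}(\btheta)=\int_{\RR^n} x_i^{2k}\,d\mu_\btheta(\x)\ \ge\ 0,\qquad \btheta\in S_{\btheta},
\]
a continuous (polynomial) nonnegative function, finite on the compact set $S_{\btheta}$. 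Because $\phi$ is a probability measure, I obtain the pointwise control
\[
L_{\bphi}(p_{2k\bm{e}_i})=\int_{S_{\btheta}}M_{2k}(\btheta)\,d\phi(\btheta)\ \le\ \sup_{\btheta\in S_{\btheta}}M_{2k}(\btheta)=:B_k<\infty .
\]

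The next, and main, step is a uniform growth estimate for $B_k$. Using compactness of $S_{\btheta}$, fix $R\ge 1$ with $S_{\btheta}\subseteq\{\norm{\btheta}\le R\}$. For the admissible families the $2k$-th moment has an explicit polynomial form that is readily bounded on $\{\norm{\btheta}\le R\}$. For a (location–scale) Gaussian component $X=m+\sigma Z$ one has $M_{2k}=\mathbb{E}[(m+\sigma Z)^{2k}]\le 2^{2k-1}\big(m^{2k}+\sigma^{2k}(2k-1)!!\big)$, so the sub-Gaussian bound $(2k-1)!!\le(2k)^k$ together with $|m|,\sigma\le R$ yields $B_k\le (C'k)^k$; for a Poisson component $M_{2k}=\sum_{j\le 2k}S(2k,j)\lambda^{j}\le R^{2k}\,\mathrm{Bell}(2k)$, and the classical estimate $\mathrm{Bell}(2k)\le (c\,k/\log k)^{2k}$ gives $B_k\le (C'k/\log k)^{2k}$; the exponential family is handled analogously through its explicit factorial moments. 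I expect this to be the main obstacle, since it is the only place where one must go beyond Assumption~\ref{ass: polynomialmoments}: the bare degree bound $\deg p_{2k\bm{e}_i}\le 2k$ together with compactness of $S_{\btheta}$ does \emph{not} by itself control the coefficient magnitudes, so the estimate has to be extracted from the concrete moment structure of each family (equivalently, from a uniform sub-exponential tail bound over $S_{\btheta}$).

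Finally I would conclude by comparison. From $B_k\le (C'k)^k$ it follows that
\[
\big(L_{\bphi}(p_{2k\bm{e}_i})\big)^{-1/2k}\ \ge\ B_k^{-1/2k}\ \ge\ (C'k)^{-1/2}\ =\ \frac{1}{\sqrt{C'}}\,\frac{1}{\sqrt{k}},
\]
so that $\sum_{k\ge 1}\big(L_{\bphi}(p_{2k\bm{e}_i})\big)^{-1/2k}\ge (C')^{-1/2}\sum_{k\ge 1}k^{-1/2}=+\infty$; in the Poisson case the bound gives the even larger lower bound $\gtrsim\sum_k \log k/k=+\infty$. This is precisely the univariate Carleman condition \eqref{eq:multivar-carleman} in the coordinate direction $\bm{e}_i$. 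Since this one–dimensional condition holds for every marginal $x_i$, the multivariate Carleman determinacy criterion \citep{nussbaum1965quasianalytic} applies and yields that $\nu_\phi$ is moment-determinate, which completes the argument.
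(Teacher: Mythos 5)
Your proposal is correct and follows essentially the same route as the paper's proof: bound the even marginal moments uniformly over the compact parameter set using the explicit moment formulas of each admissible family (Gaussian via $(2k-1)!!\le(2k)^k$, Poisson via Bell numbers, exponential via $(2k)!$), obtain a $(Ck)^{k}$- or $(Ck/\log k)^{2k}$-type bound, and conclude by comparison with $\sum k^{-1/2}$ or $\sum \log k/k$. Your remark that Assumption~\ref{ass: polynomialmoments} and compactness alone do not control the coefficient growth, so the argument must be carried out family by family, accurately reflects what the paper's proof actually does.
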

\begin{proof}
Moment determinacy follows directly from multivariate Carleman condition (\ref{eq:multivar-carleman}) and so it suffices to prove (\ref{eq:multivar-carleman}). 
Fix $i \in \{1,\dots,n\}$.  We proceed to bound even moments of $\nu_{\phi}$ along the direction $i$, for different parametric distribution classes with polynomial moments.
\hfill\break
\noindent
\textit{Family I - Gaussian mixtures.}  
In this case, $\btheta=(\bm{m},\Sig)$, so that the $\pi_i{\#}\nu_{\phi}$ is distributed as $\mathcal N\!\big(m_i,\Sigma_{ii}\big)$. 
By definition of Gaussian moments, for any $k\in\NN$, we can write
\begin{equation}\label{eq:p-2k-gi-explicit}
p_{2k \bm{e}_i}(\btheta)
= \sum_{l=0}^{k} \binom{2k}{2l}\, m_i^{\,2k-2l}\,
\Sigma_{ii}^{\,l}\,(2l-1)!!, 
\end{equation}
where $(2l-1)!! = 1 \cdot 3 \cdot 5 \cdots (2l-1)$ denotes the double factorial of an odd integer, and $-1!!=0!!=1$. Since $S_{\btheta}$ is compact, there exists $M>0$ such that for all $\btheta\in S_{\btheta}$, $||\btheta||_\infty\leq M$. This in turn implies that 
\begin{equation}
    |m_i|\leq M_{m,i}, \qquad 0\leq \Sigma_{ii}\leq M_{\sigma,i}^2
\end{equation}
for some $M_{m,i}, M_{\sigma,i}>0$.\\
Observe that $\pi_i{\#}\nu_{\phi}=m_i+\sqrt{\Sigma_{ii}}\,Z$ with $Z\sim\mathcal{N}(0,1)$. Let $\varphi:z\mapsto\frac{1}{\sqrt{2\pi}}e^{-z^2/2}$
be the standard Gaussian density and recall that the moments of the standard Gaussian distribution are given by $z_{2k}=\frac{(2k)!}{2^k k!}\le (2k)^k$. In addition, we recall that
$|a+b|^{2k}\leq 2^{2k-1}(|a|^{2k}+|b|^{2k})$  for all $a,b\in\RR$ and $k\in\NN$. Then, we can write 
\begin{equation}
\begin{split}
    p_{2k \bm{e}_i}(\btheta)
= \int_{\RR}(m_i+\sqrt{\Sigma_{ii}}z)^{2k}\varphi(z)dz
&\leq 2^{2k-1}\!\Big(|m_i|^{2k}
   + \Sigma_{ii}^kz_{2k}\Big)\\
&\leq 2^{2k-1} \Big(M_{m,i}^{2k}+\Big(M_{\sigma,i}\sqrt{2k}\Big)^{2k}\Big)\\
&\leq \big(C_{G,i}\sqrt{k}\big)^{2k}, \: \text{with} \: C_{G,i}:=2\big(M_{m,i}+\sqrt{2}\,M_{\sigma,i}\big).
\end{split}
\end{equation}
Therefore, by linearity and positivity of $L_{\bphi}$,
\begin{equation}\label{eq:Lphi-Gauss}
L_{\bphi}\left(p_{2k e_i}\right)
\leq\big(C_{G,i}\sqrt{k}\big)^{2k}\implies\sum_{k=1}^{+\infty}
\Big(L_{\bphi}(p_{2k e_i})\Big)^{-\frac{1}{2k}}
\geq \sum_{k=1}^{+\infty} \frac{1}{C_{G,i}\,\sqrt{k}}=+\infty.
\end{equation}
\hfill\break\\
\noindent
\textit{Family II - Poisson mixtures.}  
In this case, the parameter is $\btheta=(\lambda_1,\ldots,\lambda_n)\in\RR^n_{>0}$ with the $i$-th marginal distributed as $\mathrm{Pois}(\lambda_i)$. For any $k\in\NN$, the (component) $2k$-th marginal moment is given by the Touchard polynomial $T_{2k}$, i.e.,
\begin{equation}\label{eq:pois-moment}
p_{2k \bm{e}_i}(\btheta)
=\int_{\RR^n} x_i^{2k}\,d\nu_{\phi}(\x)
= T_{2k}\big(\lambda_i\big):
=\sum_{j=0}^{2k} S(2k,j)\,\lambda_i^{j},
\end{equation}
where $S(\cdot,\cdot)$ are Stirling numbers of the second kind.  
Since $S_{\btheta}$ is compact, there exists $\Lambda_i>0$ such that $0<\lambda_i\leq \Lambda_i$. 
Using the inequality
$T_{2k}(\lambda_i)
\leq B_{2k}(1+\Lambda_i)^{2k}$, where the Bell numbers $B_m:=\sum_{j=0}^{m} S(m,j)$ satisfy the standard bound $B_m\leq\left(C_B\,\frac{m}{\ln (m+1)}\right)^{m}$ for all $m\in\NN$ and some absolute constant $C_B>0$, we obtain
\begin{equation}\label{eq:pois-uniform}
p_{2k \bm{e}_i}(\btheta)\leq 
B_{2k}(1+\Lambda_i)^{2k}
\leq
\Bigg(\frac{2 C_B(1+\Lambda_i)k}{\ln(2k+1)}\Bigg)^{2k}.
\end{equation}
Therefore, by linearity and positivity of $L_{\bphi}$,
\begin{equation}\label{eq:pois-Carleman}
L_{\bphi}\big(p_{2k \bm{e}_i}\big)\leq \Bigg(\frac{k}{\widetilde{C}\ln(2k+1)}\Bigg)^{2k}\implies\sum_{k=2}^{+\infty}\Big(L_{\bphi}(p_{2k \bm{e}_i})\Big)^{-\frac{1}{2k}}
\geq
\sum_{k=2}^{+\infty}\frac{\widetilde{C}\ln(2k+1)}{k}
=+\infty,
\end{equation}
with $\displaystyle\widetilde{C}:=\frac{1}{2C_B(1+\Lambda_i)}$.
\hfill\break\\
\textit{Family III - Exponential mixtures.}
Here the distribution is controlled by $\btheta=(\lambda_1,\dots,\lambda_n)\in \RR_{>0}^n$ and the $i$-th
marginal is $\mathrm{Exp}(\lambda_i)$ supported on $\RR_{>0}$. 
To obtain a \emph{polynomial} moment map in the parameter, we set $\eta_i := \frac{1}{\lambda_i}$ with $\eta_i\in[\eta_{i,\min},\eta_{i,\max}]$, 
which is possible since $S_{\btheta}$ is compact and $\lambda_i>0$.  
For any $k\in\NN$, the $2k$-th marginal moment is
\begin{equation}\label{eq:exp-moment}
p_{2k \bm{e}_i}(\btheta)
=\int_{\RR^n} x_i^{2k}\,d\nu_{\phi}(\x)
=\int_{\RR_+} x^{2k}\,\lambda_i e^{-\lambda_i x}\,dx
=(2k)!\,\eta_i^{\,2k}.
\end{equation}
Using $(2k)!\leq (2k)^{2k}$, we get 
\begin{equation}\label{eq:exp-uniform}
p_{2k \bm{e}_i}(\btheta)\;\le\; (2k)^{2k}\,\eta_{i,\max}^{\,2k}
=\big(2\,\eta_{i,\max}\,k\big)^{2k}.
\end{equation}
Therefore, by linearity and positivity of $L_{\bphi}$,
\begin{equation}
    L_{\bphi}\big(p_{2k \bm{e}_i}\big)
\;\le\; \big(2\,\eta_{i,\max}\,k\big)^{2k}\implies
\sum_{k=1}^{+\infty}\Big(L_{\bphi}(p_{2k \bm{e}_i})\Big)^{-\frac{1}{2k}}
\;\ge\; \frac{1}{2\,\eta_{i,\max}} \sum_{k=1}^{+\infty}\frac{1}{k}
\;=\; +\infty.
\end{equation}
\end{proof}
Lemma \ref{prop:multivariate-carleman} is crucial for our theoretical analysis and can also be shown to hold for other families of parametric densities, such as Binomial distributions, SOS-based densities, and others, thereby ensuring a broad applicability of our framework.

\subsection{Case 1: Wasserstein distance}
\label{appendix: A-W2}
We start by providing a fully detailed form of the optimization problem (\ref{eq:W2_SDP_instance}), namely: 
\begin{subnumcases} {\tau^{\was}_{d,\varepsilon, R}:=
 \label{eq: W2-moment-relax}}
\inf_{\blambda\in\RR^{\NN^{2n}_{2d}},\bphi\in\RR^{\NN^{p}_{2d}}} L_{\blambda}( ||\x-\y||^2)+\varepsilon L_{\bphi}(R)\\
\label{eq: empirical_measure} \text{s.t.} \quad 
\lambda_{\balpha,\bm{0}}-\mu_{\balpha}=0,\:\lambda_{\bm{0},\balpha}-L_{\bphi}(p_{\balpha})=0, \:\balpha\in\NN^n_{2d},\\ \label{eq: psd1} \phantom{\text{s.t.}}\quad\MM_d(\blambda)\succeq\zer,\\
\label{eq: psd3}\phantom{\text{s.t.}}\quad \MM_{d-d_j}(r_j\bphi)\succeq\zer, \: j\in\set{0,\dots,l}.
\end{subnumcases}
To establish an explicit correspondence between problem (\ref{eq:W2_SDP_instance}) and problem \ref{eq: W2-moment-relax}, let $\blambda = (\lambda_{\balpha,\bbeta})_{|\balpha|+|\bbeta|\le 2d}$ and
\(\bphi = (\phi_{\bgamma})_{|\bgamma|\le 2d}\) collect all pseudo-moments indexed
by monomials in \((\x,\y)\) and \(\btheta\), respectively, up to total degree \(2d\).

The objective \(L_{\blambda}(\|\x-\y\|^2)+\varepsilon L_{\bphi}(R)\) is linear in
\((\blambda,\bphi)\), and if we write 
\begin{align}
\|\x-\y\|^2 = \sum_{|\balpha|+|\bbeta|\le 2d} u_{\balpha,\bbeta}\,\x^{\balpha}\y^{\bbeta},\: 
R(\btheta) = \sum_{|\bgamma|\le 2d} v_{\bgamma}\,\btheta^{\bgamma},
\end{align}
we obtain
\begin{align}
L_{\blambda}(\|\x-\y\|^2)+\varepsilon L_{\bphi}(R)
  = \sum_{|\balpha|+|\bbeta|\le 2d} u_{\balpha,\bbeta}\,\lambda_{\balpha,\bbeta}
    + \varepsilon \sum_{|\bgamma|\le 2d} v_{\bgamma}\,\phi_{\bgamma}.
\end{align} 
Therefore, the cost vector in (\ref{eq:W2_SDP_instance}) is given by:
\begin{align}
    \mathbf{c}_{d,\varepsilon,R}^{\was}
    :=
    \bigl( (u_{\balpha,\bbeta})_{|\balpha|+|\bbeta|\le 2d},\;
           (\varepsilon v_{\bgamma})_{|\bgamma|\le 2d} \bigr).
\end{align}
To make the linear operator in (\ref{eq:W2_SDP_instance}) explicit, let us explicitly enumerate $\NN^n_{2d} = \{\balpha^1,\dots,\balpha^{s(n,2d)}\}$, and define the truncated moment vector $\bmu_d := (\mu_{\balpha^1},\dots,\mu_{\balpha^{s(n,2d)}})^{\top}$.\\
We now introduce matrices $\mathbf{E}^{(1)},\mathbf{E}^{(2)}\in\RR^{s(n,2d)\times s(n,2d)}$
and $\mathbf{P}\in\RR^{s(n,2d)\times s(p,2d)}$ such that:
\begin{align}
    (\mathbf{E}^{(1)}\blambda)_{i} = \lambda_{\balpha^i,\mathbf{0}},\:
    (\mathbf{E}^{(2)}\blambda)_{i} = \lambda_{\mathbf{0},\balpha^i},\:
    (\mathbf{P}\bphi)_{i} = \sum_{|\bgamma|\le 2d} p_{\balpha^i,\bgamma}\,\phi_{\bgamma},
    \: i\in\set{1,\dots,s(n,2d)},
\end{align}
where $p_{\balpha^i}(\btheta)=\sum_{|\bgamma|\le 2d} p_{\balpha^i,\bgamma}\,\btheta^{\bgamma}$. Then the moment-matching constraints from (\ref{eq: empirical_measure}) can be enforced using
:
\begin{align}
    \mathbf{A}_d^{\was}
    &:=
    \begin{bmatrix}
        \mathbf{E}^{(1)} & \mathbf{0}_{s(n,2d)\times s(n,2d)}\\[0.2em]
        \mathbf{E}^{(2)} & -\mathbf{P}
    \end{bmatrix},
    \qquad
    \mathbf{b}_d^{\was}
    :=
    \begin{bmatrix}
        \boldsymbol{\mu}\\[0.2em]
        \mathbf{0}_{s(n,2d)}
    \end{bmatrix}.
\end{align}
Finally, the semidefinite constraints (\ref{eq: psd1})–(\ref{eq: psd3}) are
written compactly via:
\begin{align}
    \mathbf{M}_d^{\was}(\y_d^{\was})
    :=
    \begin{pmatrix}
        \MM_d(\blambda)              & \bm{0}                             & \cdots & \bm{0}\\[0.3em]
        \bm{0}                            & \MM_{d-d_0}(r_0\bphi)         & \cdots & \bm{0}\\[0.3em]
        \vdots                       & \vdots                        & \ddots & \vdots\\[0.3em]
        \bm{0}                            & \bm{0}                             & \cdots & \MM_{d-d_l}(r_l\bphi)
    \end{pmatrix}
    \succeq \zer,
\end{align}
where \(\MM_d(\blambda)\) is the moment matrix of order \(d\) associated with
\(\blambda\) and \(\MM_{d-d_j}(r_j\bphi)\) are the corresponding localizing
matrices. 
\\
Let us now derive the SDP dual (SOS relaxation) of the problem presented in (\ref{eq: W2-moment-relax}). Consider $q\in\RR_{2d}[\x]$ and $g\in\RR_{2d}[\y]$. The associated Lagrangian can be written as:
\begin{equation}\label{eq: lagrangian_W2}
    \begin{split}
        \mathcal{L}(\blambda,\bphi;q,g)&:= L_{\blambda}( ||\x-\y||^2)+\varepsilon L_{\bphi}(R) \\
        &\phantom{:=} + \sum_{\balpha\in\NN^n_{2d}}q_{\balpha}(\mu_{\balpha}-\lambda_{\balpha,\bm{0}})+ \sum_{\balpha\in\NN^n_{2d}}g_{\balpha}(\lambda_{\bm{0},\balpha}-L_{\bphi}(p_{\balpha}))\\
        & = \sum_{\balpha\in\NN^n_{2d}}q_{\balpha}\mu_{\balpha}  + L_{\blambda}\left(||\x-\y||^2-\sum_{\balpha\in\NN^n_{2d}}q_{\balpha}\x^{\balpha}+\sum_{\balpha\in\NN^n_{2d}}g_{\balpha}\y^{\balpha}\right)\\
       &\phantom{:=} + L_{\bphi}\left(\varepsilon R - \sum_{\balpha\in\NN^n_{2d}}g_{\balpha}p_{\balpha}\right)
    \end{split}
\end{equation}
From (\ref{eq: psd1})-(\ref{eq: psd3}), we deduce that the arguments of $L_{\blambda}$ and $L_{\bphi}$ (last two terms of  (\ref{eq: lagrangian_W2})) must be nonnegative over appropriate domains (supports). Finally, the SDP dual of (\ref{eq: W2-moment-relax}) corresponds to:
\begin{subnumcases} {\tau^{\was,*}_ {d,\varepsilon,R}:=
 \label{eq: W2-SOS-relax}} 
\sup_{q\in\RR_{2d}[\x], g\in\RR_{2d}[\y]} \sum_{\balpha\in\NN^n_{2d}}q_{\balpha}\mu_{\balpha} \\
 \text{s.t.} \quad 
h-q+g \in\Sigma_d[\x,\y],\: h:(\x,\y)\mapsto ||\x-\y||^2,\\
\phantom{\text{s.t.}}\quad \varepsilon R - \sum_{\balpha\in\NN^n_{2d}}g_{\balpha}p_{\balpha}\in\mathcal{Q}_d(r)\subset \Sigma_d[\btheta].
\end{subnumcases}
By standard results in SDP duality theory, in particular the existence of strictly feasible points, one can show that, up to numerical inaccuracies, there is no duality gap. Hence, for all $d\geq d_{\min}$, $\tau_{d,\varepsilon,R}^{\was} =\tau_{d,\varepsilon,R}^{\was,*}$. 
\\
We now establish the convergence of our regularized hierarchy by adapting the proof of \citep{lasserre2024gaussian} to the multivariate setting and to more general families of mixtures that are moment-determinate under the conditions stated, for example, in Lemma \ref{prop:multivariate-carleman}.
\begin{thm}[$\was$ case: existence of optimal solutions and convergence]\label{thm: full-convergence-W2}
Let $S_{\btheta}$ be as in (\ref{def: domain}) compact and $\mu\in\mathscr{P}(\RR^n)$ satisfying Assumptions \ref{ass: mvCarleman} and \ref{ass: polynomialmoments}. For any $d\geq d_{\min}$, problem (\ref{eq: W2-moment-relax}) admits an optimal solution $(\blambda^{*(d)},\bphi^{*(d)})$.  Moreover, any accumulation point $(\blambda^{*},\bphi^{*})$ of the sequence $(\blambda^{*(d)},\bphi^{*(d)})_{d\geq d_{\min}}$ solves (\ref{eq: wasserstein2-exactmoment}), implying that $\displaystyle\lim_{d\to+\infty}\tau_{d,\varepsilon,R}^{\was}=\tau^{\was}_{\varepsilon,R}$. 
\end{thm}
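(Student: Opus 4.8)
The plan is to follow the classical Moment-SOS convergence template: establish solvability of each finite relaxation by a compactness argument, then show that coordinatewise accumulation points of the minimizers are genuine moment sequences whose representing measures solve the infinite-dimensional problem (\ref{eq: wasserstein2-exactmoment}).

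First, existence at a fixed level $d$. Feasibility is immediate: for any $\btheta_0\in S_{\btheta}$ the pair $(\mu\otimes\mu_{\btheta_0},\delta_{\btheta_0})$ is feasible for (\ref{eq: wasserstein2-exactmoment}), and its truncation to order $2d$ satisfies (\ref{eq: empirical_measure})--(\ref{eq: psd3}), so the feasible set is nonempty. The objective is bounded below, since $\|\x-\y\|^2$ and the regularizer $R$ are sums of squares, whence $\MM_d(\blambda)\succeq0$ and $\MM_d(\bphi)\succeq0$ force $L_{\blambda}(\|\x-\y\|^2)\ge0$ and $L_{\bphi}(R)\ge0$. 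For compactness I would bound the feasible pseudo-moments: compactness of $S_{\btheta}$ makes the quadratic module Archimedean (adding the redundant ball constraint $M^2-\|\btheta\|^2\ge0$ if necessary), so the localizing constraints $\MM_{d-d_j}(r_j\bphi)\succeq0$ bound every $|\bphi_{\balpha}|$ by a constant depending only on $d$; the pure marginal entries of $\blambda$ are fixed ($\lambda_{\balpha,\bm0}=\mu_{\balpha}$) or bounded ($\lambda_{\bm0,\balpha}=L_{\bphi}(p_{\balpha})$), and mixed entries are controlled through the $2\times2$ principal minors of $\MM_d(\blambda)$ coupling a pure-$\x$ and a pure-$\y$ index, e.g. $\lambda_{\balpha,\bbeta}^2\le \mu_{2\balpha}\,L_{\bphi}(p_{2\bbeta})$. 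The feasible set is then closed and bounded, and the continuous linear objective attains its minimum $(\blambda^{*(d)},\bphi^{*(d)})$.

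Next, convergence. Monotonicity together with the relaxation property gives $\tau^{\was}_{d,\varepsilon,R}\le\tau^{\was}_{d+1,\varepsilon,R}\le\tau^{\was}_{\varepsilon,R}$, so the limit $\tau^{*}:=\lim_{d\to\infty}\tau^{\was}_{d,\varepsilon,R}\le\tau^{\was}_{\varepsilon,R}$ exists. Fixing any coordinate $(\balpha,\bbeta)$, for all $d\ge\max(|\balpha|,|\bbeta|)$ the minor bound yields $|\lambda^{*(d)}_{\balpha,\bbeta}|\le\sqrt{\mu_{2\balpha}\,L_{\bphi^{*(d)}}(p_{2\bbeta})}$, uniformly bounded since the $\bphi^{*(d)}$-moments are; each $\bphi^{*(d)}_{\balpha}$ is likewise bounded. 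A diagonal extraction then produces a subsequence along which every coordinate converges, giving infinite sequences $\blambda^{*}$ and $\bphi^{*}$. Passing to the limit preserves the semidefinite constraints ($\MM_s(\blambda^{*})\succeq0$ and $\MM_s(r_j\bphi^{*})\succeq0$ for every $s$, as limits of PSD matrices) and the linear ones ($\lambda^{*}_{\balpha,\bm0}=\mu_{\balpha}$, $\lambda^{*}_{\bm0,\balpha}=L_{\bphi^{*}}(p_{\balpha})$, each involving only finitely many converging entries). Since $S_{\btheta}$ is compact and $\bphi^{*}$ is PSD with all localizing matrices PSD, Putinar's theorem yields a representing measure $\phi^{*}\in\mathscr{P}(S_{\btheta})$. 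For $\blambda^{*}$ on the noncompact space $\RR^n\times\RR^n$ the decisive point is the multivariate Carleman condition: along each $\x_i$-direction $L_{\blambda^{*}}(x_i^{2k})=\mu_{2k\bm e_i}$ satisfies Assumption \ref{ass: mvCarleman}, while along each $\y_i$-direction $L_{\blambda^{*}}(y_i^{2k})=L_{\bphi^{*}}(p_{2k\bm e_i})$ satisfies the bound of Lemma \ref{prop:multivariate-carleman}; the $2n$ coordinatewise conditions together with $\MM_s(\blambda^{*})\succeq0$ invoke Nussbaum's theorem to produce a unique representing measure $\lambda^{*}\in\mathscr{P}(\RR^n\times\RR^n)$. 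Its marginals match the moment sequences of $\mu$ and of $\nu_{\phi^{*}}$, both determinate, so $\pi_{\x}\#\lambda^{*}=\mu$ and $\pi_{\y}\#\lambda^{*}=\nu_{\phi^{*}}$, i.e. $(\lambda^{*},\phi^{*})$ is feasible for (\ref{eq: wasserstein2-exactmoment}). Coordinatewise convergence of the finitely many moments in the objective gives $\tau^{*}=\int\|\x-\y\|^2 d\lambda^{*}+\varepsilon\int R\,d\phi^{*}\ge\tau^{\was}_{\varepsilon,R}$, which with $\tau^{*}\le\tau^{\was}_{\varepsilon,R}$ forces equality and optimality of $(\lambda^{*},\phi^{*})$.

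The main obstacle is recovering a representing measure for $\blambda^{*}$ on the unbounded product domain: unlike the compactly supported $\bphi^{*}$, positivity of the moment matrix alone does not guarantee a representing measure in several variables, so one must certify the joint multivariate Carleman condition. The only genuinely new ingredient there is the transfer of determinacy from the mixture marginal $\nu_{\phi^{*}}$, which is exactly what Lemma \ref{prop:multivariate-carleman} supplies; verifying that the two families of marginal Carleman bounds indeed meet Nussbaum's hypothesis in all $2n$ directions is the delicate analytic step, and some care is also needed to ensure that the boundedness used for existence covers every pseudo-moment entering the semidefinite constraints.
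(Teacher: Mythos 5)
Your proposal is correct and follows essentially the same route as the paper: boundedness of the feasible pseudo-moments via the compactness of $S_{\btheta}$ and PSD moment-matrix bounds for existence, then coordinatewise accumulation, passage of the PSD and linear constraints to the limit, Putinar for $\bphi^{*}$, and the multivariate Carleman condition (Assumption \ref{ass: mvCarleman} in the $\x$-directions, Lemma \ref{prop:multivariate-carleman} in the $\y$-directions) to recover a representing measure for $\blambda^{*}$, finishing with the sandwich argument. The only cosmetic differences are that the paper invokes the general bound of \citep[Proposition 2.38]{Lasserre_2015} (which also covers entries $\lambda_{\balpha,\bbeta}$ with $|\balpha|>d$ or $|\bbeta|>d$, where your $2\times 2$ minor with pure indices does not directly apply) and uses a normalization plus Banach--Alaoglu in place of your diagonal extraction.
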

\begin{proof}
   \hfill\break
   \textit{Part I: Existence of optimal solutions.} Let $d\geq d_{\min}$ and $(\blambda,\bphi)$ be feasible for (\ref{eq: W2-moment-relax}). Since $S_{\btheta}$ is compact, we might assume that $r_{l+1}(\btheta):=R^2-||\btheta||^2$ is a redundant constraint for the feasible set of mixture parameters, for some $R>0$ large enough. The associated localizing constraint ${\bf M}_{d-1}(r_{l+1}\bphi)\succeq\zer$ would then imply that $L_{\bphi}(\theta_j^{2d})<R^{2d}$, for all $j\in\set{1,\dots,p}$. Then, combining the fact that ${\bf M}_d(\bphi)\succeq\zer$ with \citep[Proposition 2.38, p 41]{Lasserre_2015}, we deduce that 
   \begin{equation}
       \label{eq: bounding_mom_phi}\abs{\phi_{\balpha}}\leq \max\set{1,R^{2d}}\quad \text{for all} \quad 0\neq\balpha\in\NN^p_{2d}.
   \end{equation}
   Furthermore, by Assumption \ref{ass: polynomialmoments}, $p_{\balpha}\in\RR_{|\balpha|}[\btheta]$, so we can write $\displaystyle p_{\balpha}(\btheta)=\sum_{\bgamma\in\NN^p_{|\balpha|}}a_{\bgamma}\btheta^{\bgamma}$, for some $a_{\bgamma}\in\RR$. This, combined with the compactness of $S_{\btheta}$ yields 
   \begin{equation}
        \abs{p_{\balpha}(\btheta)}\leq  \sum_{\bgamma\in\NN^p_{|\balpha|}}|a_{\bgamma}||\btheta^{\bgamma}|\leq C \widetilde{R}^{|\balpha|}, \:\:\widetilde{R}:=1+R, \:\: C:=\sum_{\bgamma\in\NN^p_{|\balpha|}}|a_{\bgamma}|.
   \end{equation}
   By linearity of the Riesz operator, we obtain $L_{\bphi}(p_{2\balpha})\leq C \widetilde{R}^{2|\balpha|}L_{\bphi}(1)\leq C \widetilde{R}^{2d}$ for all $\balpha\in\NN^n_d$. \\
   Now, notice that $\displaystyle \lambda_{2\balpha,\bm{0}}=\mu_{2\balpha}\leq (1+\max_{\bbeta\in\NN^n_d}|\mu_{2\bbeta}|)^{2d}:=\overline{R}^{2d}$ and that $\lambda_{\bm{0},2\bbeta}= L_{\bphi}(p_{2\bbeta})  \leq C \widetilde{R}^{2d}$. Combining these observations with ${\bf M}_d(\blambda)\succeq\zer$, we can again apply \citep[Proposition 2.38, p 41]{Lasserre_2015} to conclude that 
   \begin{equation}
    \label{eq: bounding_mom_lambda}
       \forall \balpha,\bbeta\in\NN^n_d, \quad \abs{\lambda_{\balpha,\bbeta}}\leq \max\set{1, \lambda_{2\balpha,\bm{0}},\lambda_{\bm{0},2\bbeta}}\leq\max\set{1,C \widetilde{R}^{2d},\overline{R}^{2d}}.
   \end{equation}
   Equations (\ref{eq: bounding_mom_phi}) and (\ref{eq: bounding_mom_lambda}) then imply that the feasible set of the SDP problem (\ref{eq: W2-moment-relax}) is compact, and thus, by continuity, an optimal solution $(\blambda^{*(d)},\bphi^{*(d)})\in\NN^{2n}_{2d}\times\NN^p_{2d}$ exists.
   \hfill\break\\
  \textit{ Part II: Convergence of the hierarchy.} Any optimal solution $(\blambda^{*(d)},\bphi^{*(d)})$ of (\ref{eq: W2-moment-relax}), can be extended into an infinite dimensional sequence by setting $\lambda_{\balpha,\bbeta}^{*(d)}=0$ whenever $|\balpha|+|\bbeta|>2d$, and $\phi_{\bgamma}^{*(d)}=0$ for $|\bgamma|>2d$. Next, construct a normalized sequence $(\widetilde{\blambda}^{(d)},\widetilde{\bphi}^{(d)})$ by setting 
  \begin{equation}
      \label{scaling}
   \widetilde{\lambda}^{(d)}_{\balpha,\bbeta}:=\frac{\lambda^{*(d)}_{\balpha,\bbeta}}{\max\set{1,C \widetilde{R}^{2k},\overline{R}^{2k}}}\quad\mbox{and}\quad
     \widetilde{\phi}^{(d)}_{\bgamma}:=\frac{\phi^{*(d)}_{\bgamma}}{\max\set{1,R^{2k}}}\,,
  \end{equation}
  whenever $2k-1\leq \vert\balpha+\bbeta\vert\leq 2k$ and $2k-1\leq\vert\bgamma\vert\leq 2k$, $k=1,\ldots,d$.
Such normalization enables us to conclude that both $\widetilde{\blambda}^{(d)}$ and $\widetilde{\bphi}^{(d)}$ are elements of unit balls of appropriate Banach spaces of uniformly bounded sequences. By Banach–Alaoglu theorem, these unit balls are sequentially compact in weak-$\star$ topology, so we can always find a subsequence $(d_k)_{k\in\NN}$ and infinite dimensional vectors 
$\widetilde{\blambda}^{*}$ and $\widetilde{\bphi}^{*}$ satisfying
  \begin{align}\label{eq: entrywise convergence}  
   \lim_{k\to+\infty} \widetilde{\lambda}^{(d_k)}_{\balpha,\bbeta}=\widetilde{\lambda}^{*}_{\balpha,\bbeta}, \: \forall(\balpha,\bbeta)\in\NN^n\times\NN^n,\: \text{and}\:
      \lim_{k\to+\infty} \widetilde{\phi}^{(d_k)}_{\bgamma}=\widetilde{\phi}^{*}_{\bgamma}, \: \forall\bgamma\in\NN^p.
  \end{align}
  Then the reverse scaling of the one in (\ref{scaling}), applied to
  $\widetilde{\blambda}^{*}$ and $\widetilde{\bphi}^{*}$,
  provides an infinite dimensional accumulation point $(\blambda^{*},\bphi^{*})$. 
\\
Fix $d\geq d_{\min}$ and $j\in\set{0,\dots,l+1}$, arbitrary.
By the entry-wise convergence (\ref{eq: entrywise convergence}) 
one obtains 
${\bf M}_d(\blambda^{*})\succeq\zer$ and ${\bf M}_{d-d_j}(r_j\bphi^{*})\succeq\zer$.
Moreover, again by (\ref{eq: entrywise convergence}), the moment matching constraints are also satisfied, i.e., $\lambda^{*}_{\balpha,\bm{0}}=\mu_{\balpha}$, and $\lambda^{*}_{\bm{0}, \bbeta} = L_{\bphi^{*}}(p_{\bbeta})$, for all $\balpha,\bbeta\in\NN^n$.
   \\
   It remains to prove that infinite dimensional vectors $(\blambda^{*},\bphi^{*})$ admit a representing measure.
   \\
   For $\bphi^{*}$, it follows from Putinar Positivstellensatz \citep{Putinar-1993} because the quadratic module $\mathcal{Q}_d(r)$ is Archimedean (thanks to the presence of the ball constraint $r_{l+1}$).
   \\
   For $\blambda^{*}$, observe that  $L_{\blambda^{*}}(x^{2k}_i)=\lambda^{*}_{2k\bm{e}_i,\bm{0}}=\mu_{2k\bm{e}_i}$, with $\bm{e}_i$ being the $i$-th unit vector in $\NN^n$, and recall that $\mu$ satisfies Assumption \ref{ass: mvCarleman}.
   invoking Lemma \ref{prop:multivariate-carleman}, it follows that the series whose general term is $(L_{\blambda^{*}}(y^{2k}_i))^{-\frac{1}{2k}}=(\lambda^{*}_{\bm{0}, 2k\bm{e}_i})^{-\frac{1}{2k}}=(L_{\bphi^{*}}(p_{2k\bm{e}_i}))^{-\frac{1}{2k}}$ is divergent. Therefore the infinite vector $\blambda^{*}$ satisfies the (multivariate) Carleman condition, and so according to \citep[Theorem 3.5]{Lasserre2009Book}, $\bphi^{*}$ and $\blambda^{*}$ admit moment determinate representing measures  $\phi^{*}\in\mathscr{P}(S_{\btheta})$ and $\lambda^{*}\in\mathscr{P}(\RR^n\times\RR^n)$, respectively. This implies that the couple $(\lambda^{*},\phi^{*})$ is feasible in (\ref{eq: wasserstein2-exactmoment}). Finally, using (\ref{eq: entrywise convergence}) and the fact that (\ref{eq: W2-moment-relax}) is a relaxation, we deduce that 
   \begin{align}
       \tau^{\was}_{\varepsilon,R}\leq L_{\blambda^{*}}(||\x-\y||^2)+\varepsilon L_{\bphi^{*}}(R)=\lim_{k\to+\infty} \tau^{\was}_{d_k,\varepsilon,R} \leq\tau^{\was}_{\varepsilon,R},
   \end{align}
   proving the optimality of $(\lambda^{*},\phi^{*})$ in problem (\ref{eq: wasserstein2-exactmoment}).
\end{proof}

The proof of our theoretical result that can be used for unsupervised machine learning applications is now quite straightforward.
\begin{proof}[Proof of Theorem (\ref{thm: convergence w2})]
    Thanks to Theorem \ref{thm: full-convergence-W2}, we know that the optimal solution $(\blambda^{*(d)},\bphi^{*(d)})$ exists for any $d\geq d_{\min}$. If $\rk{\bf M}_d(\bphi^{*(d)})=\rk{\bf M}_{d-d_{\min}}(\bphi^{*(d)})=K$, for $K\in\NN$, then, the Curto-Fialkow flat extension theorem \citep[Theorem 2.47] {Lasserre_2015} guarantees that $\bphi^{*(d)}$ is the truncated moment sequence of some $K-$atomic probability measure $\phi^{*}\in \mathscr{P}(S_{\btheta})$, i.e., there exist $\btheta_1,\dots,\btheta_K\in S_{\btheta}$ and $\alpha_1,\dots, a_K\in\RR_{>0}$ satisfying $\sum_{j=1}^K\alpha_j=1$ and 
    $\displaystyle \bphi^{*}=\sum_{j=1}^K\alpha_j\delta_{\btheta_j}$. Since, by Theorem \ref{thm: full-convergence-W2}, such $\phi^{*}$ solves (\ref{eq: wasserstein2-exactmoment}), it therefore represents the finite mixture that best approximates $\mu$ in $\was$-regularized sense. The parameter $K$ can be interpreted as the mixture order, points $\btheta_j$ (that can be extracted using Algorithm \ref{alg:extract_CFHL}) parametrize each component, and values $\alpha_j$ are nothing but weights associated to each component.
\end{proof}

\subsection{Case 2 - Total variation}
\label{appendix: A-TV}
As in the previous subsection, we start by providing a fully detailed formulation of the optimization problem (\ref{eq:TV_SDP_instance}), namely:
\begin{subnumcases} {\tau^{\tv}_{d,\varepsilon,R}:=
 \label{eq: TV-moment-relax}} 
\inf_{\bpsip,\bpsim\in\RR^{\NN^{n}_{2d}},\bphi\in\RR^{\NN^{p}_{2d}}}
    \psipbzero+\psimbzero +\varepsilon L_{\bphi}(R)\\
    \label{eq: mmtv}
\text{s.t.} \quad 
\psipbalp-\psimbalp-\mu_{\balpha}+L_{\bphi}(p_{\balpha})=0, \:\balpha\in\NN^n_{2d},\\
\label{eq: PSD1}
\phantom{\text{s.t.}}\quad \MM_d(\bmu)\succeq \MM_d(\bpsip)\succeq\zer,\\
\label{eq: PSD2} \phantom{\text{s.t.}}\quad \MM_d(p;\bphi)\succeq \MM_d(\bpsim)\succeq\zer, \label{eq: dom1}\\
\label{eq: suppenc}
\phantom{\text{s.t.}}\quad \MM_{d-d_j}(r_j\bphi)\succeq\zer, \: j\in\set{0,\dots,l}.
\end{subnumcases}
To establish an explicit correspondence between problems (\ref{eq:TV_SDP_instance}) and (\ref{eq: TV-moment-relax}), $\bpsip = (\psi^{+}_{\balpha})_{|\balpha|\le 2d}, \:
    \bpsim = (\psi^{-}_{\balpha})_{|\balpha|\le 2d}$ and $
    \bphi  \:= (\phi_{\bgamma})_{|\bgamma|\le 2d}$
collect all pseudo-moments indexed by monomials in $\x$ and $\btheta$ up to
degree $2d$, and set $\y_d^{\tv} := (\bpsip,\bpsim,\bphi)$.
The objective function in (\ref{eq: TV-moment-relax}) is linear in
$(\bpsip,\bpsim,\bphi)$, 
and, following the same notation as in the $\was$ case, we deduce that the exact cost vector corresponds to: 
\begin{align}
    \mathbf{c}_{d,\varepsilon,R}^{\tv}
    :=
    \bigl( \mathbf{e}_{\mathbf{0}},\; \mathbf{e}_{\mathbf{0}},\;
           (\varepsilon v_{\bgamma})_{|\bgamma|\le 2d} \bigr),
\end{align}
where $\mathbf{e}_{\mathbf{0}}\in\RR^{s(n,2d)}$ denotes the canonical basis
vector corresponding to the index $\balpha=\mathbf{0}$ in the coordinates
of $\bpsip$ and $\bpsim$.\\
Recall that $\mathbf{I}_{s(n,2d)}$ stands for the identity matrix of size $s(n,2d)$, and let $\mathbf{P}\in\RR^{s(n,2d)\times s(p,2d)}$ be the same as in the previous subsection. 
Then the moment-matching constraints in (\ref{eq: mmtv}), 
can be written compactly using
\begin{align}
    \mathbf{A}_d^{\tv}:=
    \begin{bmatrix}
        \mathbf{I}_{d} & -\mathbf{I}_{d} & \mathbf{P}
    \end{bmatrix},\quad\text{and}\quad \mathbf{b}_d^{\tv}=
    \bmu_d.
\end{align}
Finally, let us introduce the following symmetric matrices: 
\begin{align}
\mathbf{S}_d^{\tv}(\y_d^{\tv})
:=
\begin{pmatrix}
    \MM_d(\bpsip) & \bm{0} & \bm{0} & \bm{0}\\[0.3em]
    \bm{0} & \MM_d(\bmu)-\MM_d(\bpsip) & \bm{0} & \bm{0}\\[0.3em]
    \bm{0} & \bm{0} & \MM_d(\bpsim) & \bm{0}\\[0.3em]
    \bm{0} & \bm{0} & \bm{0} & \MM_d(p;\bphi)-\MM_d(\bpsim)
\end{pmatrix},
\end{align}
and
\begin{align}
\mathbf{R}_d^{\tv}(\y_d^{\tv})
:=
\begin{pmatrix}
    \MM_{d-d_0}(r_0\bphi) & \bm{0} & \cdots & \bm{0}\\[0.3em]
    \bm{0} & \MM_{d-d_1}(r_1\bphi) & \cdots & \bm{0}\\[0.3em]
    \vdots & \vdots & \ddots & \vdots\\[0.3em]
    0 & 0 & \cdots & \MM_{d-d_l}(r_l\bphi)
\end{pmatrix}.
\end{align}
Then, the semidefinite constraints
(\ref{eq: PSD1})–(\ref{eq: suppenc}) can be compactly written as: 
\begin{align}
\mathbf{M}_d^{\tv}(\y_d^{\tv})
:=
\begin{pmatrix}
    \mathbf{S}_d^{\tv}(\y_d^{\tv}) & \bm{0}\\[0.3em]
    \bm{0} & \mathbf{R}_d^{\tv}(\y_d^{\tv})
\end{pmatrix}
\succeq \zer,
\end{align}
which is equivalent to requiring each diagonal block in
$\mathbf{S}_d^{\tv}(\y_d^{\tv})$ and $\mathbf{R}_d(\y_d^{\tv})$ to be
positive semidefinite. All off-diagonal blocks are zero matrices of appropriate sizes. 
\\
To derive the SDP dual of (\ref{eq: TV-moment-relax}), observe that the Lagrange multipliers corresponding to the SDP constraints (\ref{eq: PSD1})--(\ref{eq: PSD2}) are positive semidefinite matrices. These can be interpreted as the Gram matrices of certain sum-of-squares (SOS) polynomials. Therefore, let  $\sigma_+, \sigma_- \in \Sigma_d[\x]$ and $q \in \mathbb{R}_{2d}[\x]$. The associated Lagrangian is then given by:
\begin{equation}\label{eq: tv-Lagrangian}
    \begin{split}
        \mathcal{L}(\bpsi_\pm,\bphi;q,\sigma_\pm)&= \psipbzero+\psimbzero +\varepsilon L_{\bphi}(R)\\
        &\phantom{=}+\sum_{\balpha\in\NN^n_{2d}}q_{\balpha} \left[\psipbalp-\psimbalp-\mu_{\balpha}+L_{\bphi}(p_{\balpha})\right]\\
        & \phantom{=}+\langle\Sigma_+, \MM_d(\bpsip)-\MM_d(\bmu)\rangle + \langle\Sigma_-, \MM_d(\bpsim)-\MM_d(p;\bphi)\rangle\\
        & = -\sum_{\balpha\in\NN^n_{2d}} q_{\balpha}\mu_{\balpha} + L_{\bpsip}(1+q)+L_{\bpsim}(1-q)\\
        &\phantom{=}+L_{\bphi}\left(\varepsilon R+\sum_{\balpha\in\NN^n_{2d}} q_{\balpha}p_{\balpha}\right)\\
        & \phantom{=} +\langle\Sigma_+, \MM_d(\bpsip)\rangle^{\text{\footnotemark}} + \langle\Sigma_-, \MM_d(\bpsim)\rangle \\
        &\phantom{=}-\langle \Sigma_+, \MM_d(\bmu)\rangle  - \langle \Sigma_-,\MM_d(p;\bphi)\rangle\\
        & = \sum_{\balpha\in\NN^n_{2d}} (-q_{\balpha}-\sigma_{+,\balpha})\mu_{\balpha}+L_{\bpsip}(\underbrace{1+q+\sigma_+}_{\text{(a)}})+L_{\bpsim}(\underbrace{1-q+\sigma_-}_{\text{(b)}})\\
         &\phantom{=} +L_{\bphi}\left(\underbrace{\varepsilon R+\sum_{\balpha\in\NN^n_{2d}} (q_{\balpha} - \sigma_{-,\balpha})p_{\balpha}}_{\text{(c)}}\right).
    \end{split}
\end{equation}
\noindent
By duality, and to ensure the finiteness of $\inf_{\bpsi_\pm,\bphi}\mathcal{L}(\bpsi_\pm,\bphi;q,\sigma_\pm)$, polynomial expressions in $\text{(a), (b) and (c)}$ must not take negative values, which can be achieved by requiring them to have adequate SOS decompositions.
\hfill\break
Finally, we derive the dual problem corresponding to the problem (\ref{eq: TV-moment-relax})
\footnotetext{\noindent We can rewrite this as : $\displaystyle\left\langle \Sigma_+, L_{\bpsip}(\vb_d\vb_d^\top)\right\rangle=L_{\bpsip}(\vb_d^\top\Sigma_+\vb_d)=L_{\bpsip}(\sigma_+)$. Other expressions can be handled similarly.}:
\begin{subnumcases} {\tau_{d,\varepsilon,R}^{\tv,*}:=
 \label{eq: TV-SOS-relax}} 
\sup_{q\in\RR_{2d}[\x],\sigma_\pm\in\Sigma_d[\x]}  \sum_{\balpha\in\NN^n_{2d}} (-q_{\balpha}-\sigma_{+,\balpha})\mu_{\balpha}\\
 \text{s.t.} \quad\:
1+q+\sigma_+\in \Sigma_d[\x],\\
\phantom{\text{s.t.}}\quad\: 1-q+\sigma_-\in \Sigma_d[\x],\\
\phantom{\text{s.t.}}\quad\: \varepsilon R+\sum_{\balpha\in\NN^n_{2d}} (q_{\balpha}-\sigma_{-,\balpha})p_{\balpha}\in\mathcal{Q}_d(r).
\end{subnumcases}
One can prove that Slater's condition holds for the pair of SDP problems (\ref{eq: TV-moment-relax})-(\ref{eq: TV-SOS-relax}), implying that there is no duality gap (up to numerical inaccuracies). 
Therefore, for all $d \geq d_{\min}$, $\tau_{d,\varepsilon,R}^{\tv} = \tau_{d,\varepsilon,R}^{\tv,*}.
$
\\
Like in the $\was$ case, let us now prove the convergence of our regularized hierarchy for computing the best $\tv$-approximation of a mixture known only through a finite number of its moments.
\begin{thm}[$\tv$ case: existence of optimal solutions and convergence]\label{thm: full-convergence-tv}
Let $S_{\btheta}$ be as in (\ref{def: domain}) compact and $\mu\in\mathscr{P}(\RR^n)$ satisfying Assumptions \ref{ass: mvCarleman} and \ref{ass: polynomialmoments}. For any $d\geq d_{\min}$, problem (\ref{eq: TV-moment-relax}) admits an optimal solution $(\bpsipstard,\bpsimstard,\bphi^{*(d)})$.  Moreover, any accumulation point $(\bpsipstar,\bpsimstar,\bphi^{*})$ of the sequence of optimal pseudo-moment vectors $(\bpsipstard,\bpsimstard,\bphi^{*(d)})_{d\geq d_{\min}}$ solves (\ref{eq: tv-lp}), implying $\displaystyle\lim_{d\to+\infty}\tau_{d,\varepsilon,R}^{\tv}=\tau^{\tv}_{\varepsilon,R}$. 
\end{thm}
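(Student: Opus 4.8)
The plan is to mirror the two-part structure of the proof of Theorem \ref{thm: full-convergence-W2}, adapting each step to accommodate the domination constraints (\ref{eq: PSD1})--(\ref{eq: dom1}) that replace the single moment-matrix constraint of the $\was$ case. For existence at fixed $d$, I would first append the redundant ball constraint $r_{l+1}(\btheta):=R^2-\|\btheta\|^2\ge0$ to the description of $S_{\btheta}$, so that $\MM_{d-1}(r_{l+1}\bphi)\succeq0$ together with $\MM_d(\bphi)\succeq0$ bounds every pseudo-moment of $\bphi$ exactly as in (\ref{eq: bounding_mom_phi}). The new ingredient is that the two domination constraints directly bound $\bpsip$ and $\bpsim$: from $\MM_d(\bmu)\succeq\MM_d(\bpsip)\succeq0$ one reads off on the diagonal $\psi_{+,2\balpha}\le\mu_{2\balpha}$ for $|\balpha|\le d$, and since these finitely many moments of $\mu$ are finite constants, \citep[Proposition 2.38]{Lasserre_2015} bounds every entry of $\bpsip$; likewise $\MM_d(p;\bphi)\succeq\MM_d(\bpsim)\succeq0$ gives $\psi_{-,2\balpha}\le L_{\bphi}(p_{2\balpha})$, bounded by the same polynomial-moment estimate used in the $\was$ case (Assumption \ref{ass: polynomialmoments} and compactness of $S_{\btheta}$). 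Hence the feasible set is compact and, by continuity, an optimizer $(\bpsipstard,\bpsimstard,\bphi^{*(d)})$ exists.

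For convergence I would extend each optimizer to an infinite sequence by zero-padding, rescale the degree-$2k$ blocks to land in unit balls of weighted $\ell^\infty$ spaces, and invoke Banach--Alaoglu to extract a subsequence $(d_k)$ along which all three pseudo-moment vectors converge entrywise to accumulation points $(\bpsipstar,\bpsimstar,\bphi^{*})$. Entrywise convergence passes every semidefinite constraint to the limit for each fixed order --- in particular $\MM_d(\bmu)\succeq\MM_d(\bpsipstar)\succeq0$ and $\MM_d(p;\bphi^{*})\succeq\MM_d(\bpsimstar)\succeq0$ hold for all $d$ --- and it preserves the affine relation $\psi^{*}_{+,\balpha}-\psi^{*}_{-,\balpha}-\mu_{\balpha}+L_{\bphi^{*}}(p_{\balpha})=0$.

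The crux is to produce genuine representing measures for the three limits. For $\bphi^{*}$ this follows from Putinar's Positivstellensatz \citep{Putinar-1993}, the quadratic module being Archimedean thanks to the ball constraint, yielding $\phi^{*}\in\mathscr{P}(S_{\btheta})$. For $\bpsipstar$ the domination $\MM_d(\bpsipstar)\preceq\MM_d(\bmu)$ forces on the diagonal $L_{\bpsipstar}(x_i^{2k})=\psi^{*}_{+,2k\bm{e}_i}\le\mu_{2k\bm{e}_i}=L_{\bmu}(x_i^{2k})$, so the Carleman series for $\bpsipstar$ dominates that of $\mu$ (Assumption \ref{ass: mvCarleman}) and diverges; by \citep[Theorem 3.5]{Lasserre2009Book} there is a unique $\psi_+\in\mathscr{M}_+(\RR^n)$. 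The analogous argument for $\bpsimstar$ is exactly where Lemma \ref{prop:multivariate-carleman} becomes indispensable: the diagonal domination $L_{\bpsimstar}(x_i^{2k})\le L_{\bphi^{*}}(p_{2k\bm{e}_i})$ bounds the even moments of $\psi_-$ by those of the \emph{mixture} $\nu_{\phi^{*}}$, whose Carleman condition is guaranteed by the Lemma, so $\psi_-\in\mathscr{M}_+(\RR^n)$ is again determinate. I expect this step --- transferring determinacy to $\psi_-$ through the mixture rather than through a fixed reference measure --- to be the main obstacle, since it is precisely the ingredient absent from the classical theory and supplied by Lemma \ref{prop:multivariate-carleman}.

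With all three sequences determinate, the matrix domination and moment-matching identities translate verbatim into the measure-level constraints $\psi_+-\psi_-=\mu-\nu_{\phi^{*}}$, $\psi_+\le\mu$, $\psi_-\le\nu_{\phi^{*}}$ via the equivalence $\mu\le\nu\iff\MM_d(\bnu)\succeq\MM_d(\bmu)\ \forall d$ recorded before (\ref{eq: PSD1}), so $(\psi_+,\psi_-,\phi^{*})$ is feasible for (\ref{eq: tv-lp}). Its objective equals $\lim_k\tau^{\tv}_{d_k,\varepsilon,R}$ by convergence of the $\bm{0}$-indexed entries, and combining this with the relaxation inequality $\tau^{\tv}_{d,\varepsilon,R}\le\tau^{\tv}_{\varepsilon,R}$ together with monotonicity of the lower bounds sandwiches the limit, yielding both optimality of the accumulation point and $\lim_{d\to+\infty}\tau^{\tv}_{d,\varepsilon,R}=\tau^{\tv}_{\varepsilon,R}$.
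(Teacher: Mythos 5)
Your proposal is correct and follows essentially the same route as the paper's proof: the same compactness argument via the ball constraint and the diagonal entries extracted from the domination constraints (using Proposition 2.38 of Lasserre 2015), the same zero-padding/rescaling/Banach--Alaoglu extraction, the same trio of representing-measure arguments (Putinar for $\bphi^{*}$, Carleman transfer from $\bmu$ for $\bpsipstar$, and Lemma~\ref{prop:multivariate-carleman} for $\bpsimstar$), and the same sandwich to conclude optimality. No substantive differences.
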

\begin{proof} The proof mimics that of Theorem \ref{thm: full-convergence-W2}.
   \hfill\break
    \textit{Part I: Existence of optimal solutions.}  Let $(\bpsip,\bpsim,\bphi)$ be feasible for (\ref{eq: TV-moment-relax}).  Notice that for all  $0\neq\balpha\in\NN^p_{2d}$, $|\phi_{\balpha}|$ can be bounded the same way as in (\ref{eq: bounding_mom_phi}). Furthermore, observe that 
    \begin{align}
        {\bf M}_d(\bmu)-{\bf M}_d(\bpsip)\succeq\zer\iff \bm{q}^\top({\bf M}_d(\bmu)-{\bf M}_d(\bpsip))\bm{q}\geq0, \:\forall \bm{q}\in\RR^{\binom{n+d}{d}},
    \end{align}
    which, by using the definition of Riesz functional can be rewritten as $L_{\bmu}(q^2)-L_{\bpsip}(q^2)\geq 0$ for all $q\in\RR_{d}[\x]$. In particular, we obtain 
    \begin{align}
        \psipbzero\leq \mu_{\bm{0}} \quad \text{and} \quad L_{\bpsip}(x_i^{2d})\leq L_{\bmu} (x_i^{2d}), \: \forall i\in\set{1,\dots,n}.
    \end{align}
    Since, additionally, ${\bf M}_d(\bpsip)\succeq\zer$, by using \citep[Proposition 2.38, p 41]{Lasserre_2015}, we get
    \begin{equation}\label{eq: bound_psi+}
        \forall\balpha\in\NN^n_{2d}, \: |\psipbalp|\leq\max\set{\mu_{\bm{0}},\max_{i\in\set{1,\dots,n}}L_{\bmu} (x_i^{2d})}.
    \end{equation}
    Similarly, as $\MM_d(p;\bphi)-\MM_d(\bpsim)\succeq\zer$, we deduce that $\displaystyle L_{\bphi}\left(\sum_{\balpha,\bbeta\in\NN^n_{d}}q_{\balpha}q_{\bbeta}p_{\balpha+\bbeta}\right)-L_{\bpsim}(q^2)\geq 0$ for all $q\in\RR_{d}[\x]$, and in particular
    \begin{equation}
        \psimbzero\leq\phi_{\bm{0}} \quad \text{and} \quad L_{\bpsim}(x_i^{2d})\leq  L_{\bphi}(p_{2d\bm{e}_i}), \: \forall i\in\set{1,\dots,n}.
    \end{equation}
     Positive semidefiniteness of $\MM_d(\bpsim)$ then implies 
    \begin{equation}\label{eq: bound_psi-}
        \forall\balpha\in\NN^n_{2d}, \: |\psimbalp|\leq\max\set{\phi_{\bm{0}},C\widetilde{R}^{2d}},
    \end{equation}
    where the term $C\widetilde{R}^{2d}$ is the same as in the proof of Theorem \ref{thm: full-convergence-W2}. By Equations (\ref{eq: bounding_mom_phi}), (\ref{eq: bound_psi+}), and (\ref{eq: bound_psi-}), the feasible set of (\ref{eq: TV-moment-relax}) is compact. As the cost is linear,
    there exists an optimal solution $(\bpsipstard,\bpsimstard,\bphi^{*(d)})\in \RR^{\NN^{n}_{2d}}\times \RR^{\NN^{n}_{2d}}\times\RR^{\NN^{p}_{2d}}$. 
     \hfill\break\\
  \textit{ Part II: Convergence of the hierarchy.}
  Again it is similar to Part II in the proof of Theorem \ref{thm: full-convergence-W2}. Extend $(\bpsipstard,\bpsimstard,\bphi^{*(d)})$ to infinite dimensional sequences, normalize via a scaling similar to (\ref{scaling}), apply Banach-Alaoglu theorem, and do the reverse scaling. Then
  deduce that there exists a subsequence $(d_k)_{k\in\NN}$  and $(\bpsipstar,\bpsimstar,\bphi^{*})$ such that for all $(\balpha,\bbeta)\in\NN^n\times\NN^p$,
  \begin{equation}
      \lim_{k\to+\infty} \psipbalp^{*(d_k)}=\psipbalp^{*},  \lim_{k\to+\infty} \psimbalp^{*(d_k)}=\psimbalp^{*} \: \text{and} \:  \lim_{k\to+\infty}\phi^{*(d_k)}_{\bbeta}=\phi^{*}_{\bbeta}.
  \end{equation}
  By closedness of PSD cone and linearity, the limiting sequence is feasible in (\ref{eq: TV-moment-relax}) for every $d\geq d_{\min}$. \\
  In particular, this implies that $\MM_d(\bphi^{*})\succeq\zer$, and since $\mathcal{Q}_d(r)$ is Archimedean, by Putinar's Positivstellensatz, $\bphi^{*}$ has a representing  measure, $\phi^{*}\in\mathscr{P}(S_{\btheta})$. \\
  Moreover, as $\bmu$ satisfies Assumption \ref{ass: mvCarleman} and dominates $\bpsipstar$, $\bpsipstar$ satisfies Assumption \ref{ass: mvCarleman} as well. Having $\MM_d(\bpsipstar)\succeq\zer$ for all $d\in\NN$ then yields the existence of some representing measure $\psi_+^{*}\in\mathscr{P}(\RR^n)$. Similarly, we can use Lemma \ref{prop:multivariate-carleman} to deduce that $\bpsimstar$ admits some representing measure $\psi_-^{*}\in\mathscr{P}(\RR^n)$. \\
  Finally, since (\ref{eq: TV-moment-relax}) is a regularized relaxation of (\ref{eq: tv-lp}), we deduce that 
  \begin{align}
\tau^{\tv}_{\varepsilon,R}\geq\tau_{d_k,\varepsilon,R}^{\tv}\xrightarrow{k\to+\infty}\psi_{+,\bm{0}}^{*}+\psi_{-,\bm{0}}^{*}+\varepsilon L_{\bphi^{*}}(R).
  \end{align}
Moreover, since $(\psi_+^{*},\psi_-^{*})$ is just one particular feasible solution, we necessarily have 
\begin{equation}
    \tau^{\tv}_{\varepsilon,R}\leq \psi_{+,\bm{0}}^{*}+\psi_{-,\bm{0}}^{*}+\varepsilon L_{\bphi^{*}}(R).
\end{equation}
Hence, $(\psi_+^{*},\psi_-^{*})$ is optimal in (\ref{eq: tv-lp}) for $\nu=\nu_{\phi^{*}}$.
\end{proof}
Asymptotic convergence and existence of solutions at each level of the hierarchy, both guaranteed by Theorem \ref{thm: full-convergence-tv}, make it now possible to derive the proof of Theorem \ref{thm: convergence tv}. 
\begin{proof}[Proof of Theorem \ref{thm: convergence tv}] Existence follows from Theorem (\ref{thm: full-convergence-tv}). To recover the best (in $\tv$-regularized sense) $K-$atomic representation of $\mu$, proceed as in proof of Theorem \ref{thm: convergence w2}.
\end{proof}
\section{Handling high-dimensional data}
\label{appendix: B}
\subsection{Standard dimensionality reduction}
High-dimensional data present inherent challenges, as samples tend to be sparsely distributed in the ambient space, making structure difficult to detect. Dimensionality reduction techniques such as principal component analysis (PCA) and random projections have been shown to preserve both geometric and probabilistic structure under mild conditions \citep{Dasgupta1999learningmixtures}. In particular, random subspace embeddings ensure that spherical regions in the projected space retain approximately their expected number of sample points, thereby maintaining mixture separation. Within the Gaussian mixture framework, this observation motivates common simplifying assumptions, such as diagonal (not necessarily homoscedastic) covariance matrices, which facilitate modeling and computation without severely compromising representational power.

Building on these insights, we applied our method to the MNIST dataset. Specifically, we first reduced the dimensionality from $n=784$ to $n=2$ using PCA, thereby obtaining a low-dimensional representation that preserves the essential cluster structure. We then estimated the mixture order and its parameters via our approach, with the corresponding results presented in Figure~\ref{fig:kmeans-iter-sidebyside}.
\subsection{Univariate relaxations}
Instead of directly approximating the measure $\mu$ supported on an $n$-dimensional space by a mixture $\nu_\phi \in \mathscr{P}(\RR^n)$, 
we can consider its canonical \emph{projections} (push-forward measures) $\pi_i \# \mu \in \mathscr{P}(\RR)$ 
and approximate them by mixtures of univariate distributions $\nu_{\phi^i} \in \mathscr{P}(\RR)$, where for each $i \in \{1,\dots,n\}$, $\phi^i \in \mathscr{P}(\pi_i(S_{\btheta}))$.

For each direction $i \in \{1,\dots,n\}$, we solve the following univariate problem:
\begin{equation} \label{eq: projections-univariate}
   \overline{\tau}_{\varepsilon,R}^{\dist,i} 
   := \inf_{\nu_{\phi^i}\in\mathscr{P}(\RR),\; \phi^i\in\mathscr{P}(\pi_i(S_{\btheta}))} 
   \Big\{ \dist(\pi_i\#\mu,\nu_{\phi^i}) + \varepsilon R(\pi_i(\btheta)) \Big\},
   \qquad \dist \in \{\was,\tv\}.
\end{equation}
This yields $n$ separate univariate approximation problems, each of which can be efficiently addressed by SDP relaxations of order $d \geq d_{\min}$. 
Since the complexity of these relaxations grows with both the ambient dimension and the relaxation degree, working in the univariate setting allows one to employ higher-order relaxations at manageable cost. 
As higher-order relaxations are typically more powerful, this substantially increases the likelihood of finite convergence and enables the extraction of high-quality estimates using Algorithm~\ref{alg:extract}.
\\
Let $\widehat{K}_i$ denote the estimated mixture order along the direction $i$. 
Although heuristic, it is natural to estimate the global mixture order by the \textit{mode of the estimates} $\{\widehat{K}_i\}_{i=1}^n$. If there is no certainty about the most frequent value, this approach can still provide a finite candidate set of plausible values for $K$, 
somewhat analogous to confidence intervals in statistical inference.
\begin{rem}
In principle, an estimate $\{\widehat{\btheta}_j\}_{j=1}^{\widehat{K}}$ of the parameters of the multivariate mixture (our main object of interest) can be constructed \emph{a posteriori} by stitching together the results obtained from the $n$ univariate cases. 
However, since each coordinate may yield a different number of candidate components, this reconstruction may result in a combinatorially large set of possible multivariate parameter estimates.
\end{rem}
As an illustration, we apply Algorithm \ref{alg:extract} in order to solve SDP relaxations of problem (\ref{eq: projections-univariate}). Results are presented in Table \ref{tab: univariate_k_distributions}.
\begin{table}[H]
\centering
\begin{tabular}{r r r r r r}
\hline
\multicolumn{3}{c}{$\varepsilon = 10^{-5}$} & \multicolumn{3}{c}{$\varepsilon = 0.1$} \\
\cline{1-3}\cline{4-6}
$\widehat{K}_i$ & Count & $\widehat{K}_i/n$ (\%) & $\widehat{K}_i$ & Count & $\widehat{K}_i/n$ (\%) \\
\hline
1 & 161 & 20.54 & 1 & 163 & 20.79 \\
2 & 60  & 7.65  & 2 & 140 & 17.86 \\
3 & 80  & 10.20 & 3 & \textbf{452} & \textbf{57.65} \\
4 & \textbf{461} & \textbf{58.80} & 4 & 8   & 1.02  \\
5 & 22  & 2.81  & 5 & 21  & 2.68  \\
\hline
\textbf{Total} & \textbf{784} & \textbf{100.00} & \textbf{Total} & \textbf{784} & \textbf{100.00} \\
\hline
\end{tabular}
\caption{Distribution of univariate Gaussian mixture order estimates obtained by computing SDP relaxations $\overline{\tau}_{4,\varepsilon,R}^{\was,i}$ for $i\in\set{1,\dots,n}$, with two different values of the regularization parameter $\varepsilon$. MNIST data set with digits $\{0,1,2\}$ was used. We set $\texttt{tol}=10^{-6}$.}
\label{tab: univariate_k_distributions}
\end{table}
From Table~\ref{tab: univariate_k_distributions}, we can see that when $\varepsilon=10^{-5}$, the modal outcome across coordinate-wise relaxations is $\widehat{K}_i=4$ (58.8\%), whereas for $\varepsilon=0.1$ the dominant value shifts to $\widehat{K}_i=3$ (57.7\%). This behavior is consistent with the role of the regularization term: weaker penalization tends to overfit local irregularities, effectively splitting classes into additional components, while stronger penalization suppresses spurious complexity and is more likely to recover the true number of classes. Minor frequencies at $\widehat{K}_i \in \set{1,2,5}$ reflect directions with little discriminative signal or excess noise. In particular, many coordinates near the image borders contain only background pixels, so the distribution along those axes is essentially unimodal, giving rise to $\widehat{K}_i=1$. Taken together, these results suggest that the candidate set of plausible mixture orders is $\set{2,3,4}$, with $\widehat{K}=3$ favored under moderate regularization, in line with the ground-truth number of digit classes. Similar results were obtained by considering $\overline{\tau}_{4,\varepsilon,R}^{\tv,i}$.
\begin{rem}
    Another heuristic for estimating the mixture order in high dimensions is to project the data onto randomly chosen low-dimensional subspaces. For example, by considering $\overline{n} \ll n$ random one-dimensional projections, one can apply the same univariate relaxation strategy to obtain mixture order estimates. This may be advantageous compared to canonical (coordinate) projections, since random directions are more likely to cut through informative linear combinations of features rather than being restricted to individual coordinates. In settings such as images, many canonical directions correspond to near-constant background pixels and thus provide little signal, whereas random projections can capture structure spread across multiple coordinates, yielding more reliable order estimates.
\end{rem}

\section{Experimental setup}
\label{appendix: C}
\hfill\break\\
All experiments were performed on a computer with a 13th Gen Intel Core i7-13620H CPU @ 2.40 GHz, 10 cores, 16 logical processors, and 32GB of RAM. 
All the studied GMP instances were modeled using the Julia library TSSOS \citep{Magron2021TSSOSAJ}, and their corresponding Moment-SOS relaxations were solved with Mosek \citep{andersen2000mosek}. \\

In all our experiments, input data were normalized to take values in the interval $[0,1]$. EM algorithm was always run with 100 iterations and a convergence tolerance of $10^{-5}$.
\hfill\break
The  numerical implementation of our method is hosted in the following GitHub repository: \href{https://github.com/SoDvc2226/Mixtures_Via_SDPs}{\texttt{Mixtures\_Via\_SDPs}
}.
\paragraph{Experiments in Figure \ref{fig:illustration}.} As mentioned in the main text, Algorithm \ref{alg:extract} identifies $\widehat{K}=4$ components for either choice $\dist\in\set{\was,\tv}$. Namely, for the $\was$ distance, we obtain:
\begin{equation}
    \begin{split}
        \widehat{\m}_{\was} & = (0.1238 , 0.8840 , 0.5052 , 0.5087),\\
         \widehat{\bm{\sigma}}_{\was} & = (0.1616 , 0.1548 , 0.4001 , 0.1461),\\
         \widehat{\bphi}_{\was} & = (0.28045339 ,0.26621474 , 0.21218507 , 0.24114680).
    \end{split}
\end{equation}
Moreover, for the $\tv$ distance, the components are given by:
\begin{equation}
    \begin{split}
        \widehat{\m}_{\tv} & = (0.1264, 0.5106 , 0.8773, 0.5047),\\
         \widehat{\bm{\sigma}}_{\tv} & = (0.1509, 0.1441, 0.1593, 0.4053),\\
         \widehat{\bphi}_{\tv} & = (0.28645039, 0.23532885, 0.27091699, 0.20730377).
    \end{split}
\end{equation}
\paragraph{Experiments in Figure \ref{fig: sep-nonsphe}.} We have used the following description for the support of the mixing measure:
\begin{equation}
    S_{\m, \Sig} := \set{((m_1,m_2),(\sigma_1,\sigma_2)) \in \RR^2 \times \RR^2 \mid (m_1,m_2)\in[0,1]^2, (\sigma_1,\sigma_2)\in[0.05,1]^2 }.
\end{equation}
Regularization parameter was set to $\varepsilon=10^{-3}$, and the rank tolerance was $\texttt{tol}=10^{-2}$.

\paragraph{Experiments in Figure \ref{fig:kmeans-iter-sidebyside}.} We have used the following description for the support of the mixing measure:
\begin{equation}
    S_{\m, \Sig} := \set{(\m,\bm{\sigma}) \in \RR^2 \times \RR^2 \mid (m_1,m_2)\in[0,1]^2, (\sigma_1,\sigma_2)\in[5\cdot10^{-5},0.15]^2 }.
\end{equation}
As previously, we set $\varepsilon=10^{-3}$ and  $\texttt{tol}=10^{-2}$.

\end{document}